\newcommand\version{September 5, 2011}
\newtheorem{theorem}{Theorem}[section]
\newtheorem{proposition}[theorem]{Proposition}
\newtheorem{lemma}[theorem]{Lemma}
\newtheorem{corollary}[theorem]{Corollary}
\theoremstyle{definition}
\newtheorem{definition}[theorem]{Definition}
\theoremstyle{remark}
\newtheorem{remark}[theorem]{Remark}
\numberwithin{equation}{section}
\newcommand{\C}{\mathbb{C}}
\renewcommand{\epsilon}{\varepsilon}
\newcommand{\N}{\mathbb{N}}
\newcommand{\R}{\mathbb{R}}
\newcommand{\var}{\varphi(x,\zeta)}
\newcommand{\tet}{\theta(x,\zeta)}
\DeclareMathOperator{\im}{Im}
\DeclareMathOperator{\re}{Re}
\DeclareMathOperator{\sgn}{sgn}
\DeclareMathOperator{\Tr}{Tr}
\DeclareMathOperator{\tr}{Tr}
\begin{document}

\title[Trace Formulae--- \version]{Trace formulas for Schr\"odinger operators on the half-line}

\author{Semra Demirel and Muhammad Usman}
\address{Semra Demirel, University of Stuttgart, Department of Mathematics, Pfaffenwaldring 57, D-70569 Stuttgart}
\email{Semra.Demirel@mathematik.uni-stuttgart.de}
\address{Muhammad Usman, Imperial College London, Department of Mathematics, London;
COMSATS Institute of Information Technology, Park Road, Chak Shahzad, Islamabad, Pakistan}
\email{m.usman08@imperial.ac.uk}

\begin{abstract}
We study the scattering problem for the Schr\"odinger equation on the half-line with Robin boundary condition at the origin. We derive an expression for the trace of the difference of the perturbed and unperturbed resolvent in terms of a Wronskian. This leads to a representation for the perturbation determinant and to trace identities of Buslaev-Faddeev type.
\end{abstract}

\maketitle

\section{Introduction}
Let $H$ be the self-adjoint operator in $L_2[0,\infty)$ defined by

\begin{equation}\label{erstegl}
H=H_0 + V(x),\quad H_0= -\frac{d^2}{dx^2}, \quad u'(0) = \gamma u(0),
\end{equation}
where $\gamma \in \R$. The potential $V$ is real-valued and goes to zero at infinity (in some averaged sense). Then $H$ has continuous spectrum on the positive semiaxis and discrete negative spectrum, consisting of eigenvalues $\{\lambda_j\}$.  If $V$ decays fast enough, then there are only finitely many negative eigenvalues.

The Hamiltonian $H$ describes a one-dimensional particle restricted to the positive semiaxis. The parameter $\gamma$ describes the strength of the interaction of the particle with the boundary. Negative $\gamma$ corresponds to an attractive interaction and positive $\gamma$ to a repulsive one.

In this paper we derive trace formulas for the negative eigenvalues of $H$. Formulas of this type first appeared in 1953 in the paper of Gel'fand and Levitan, \cite{GL}, where some identities for the eigenvalues of a regular Sturm-Liouville operator were obtained. Later, also Diki{\u\i} studied similar formulas, see \cite{D}. The next important contribution in this direction was made by Buslaev and Faddeev \cite{BF} in 1960. They studied the singular Sturm-Liouville operator on the half-line with Dirichlet boundary condition at the origin. Under some assumptions on the short range potential (i.e. integrable on $(0, \infty)$ with finite first moment), they proved a series of trace identities. The second one in this series states that
\begin{equation}\label{bf}
\sum_{j=1}^N |\lambda_j| - \frac{2}{\pi} \int_0^{\infty} \left( \tilde{\eta} (k) - \frac{1}{2k} \int_0^{\infty} V(x) \,dx \right) k  \,dk = \frac{1}{4} V(0),
\end{equation}
where $\tilde{\eta} (k)$ is the so-called limit phase and has a scattering theoretical nature. A more precise definition will be given later. This result was extended in 1997 by Rybkin to long-range potentials (nonintegrable on $(0,\infty)$), \cite{R,BR}. Analog formulas for charged particles were obtained already in 1972 by Yafaev \cite{Ya}. 

Trace formulas for the whole line Schr\"odinger operator as well as their generalizations to the multi-dimensional case have already been studied extensively (see, e.g., the surveys \cite{B,BY,K}) and have great importance in applications. Numerous papers are devoted to the subject of inverse spectral problems for Schr\"odinger operators, where these formulas turn out to be very useful, see e.g. \cite{DT,GH,GHSZ,A,BT} and references therein. Another consequence are  the well-known Lieb-Thirring inequalities, which in dimension one follow from the third Faddeev-Zakharov trace formula, see \cite{FZ} and \cite{LT}. This formula was extended in \cite{LW} by Laptev and Weidl to systems of Schr\"odinger operators, which leads to sharp Lieb-Thirring inequalities in all dimensions.

Our goal is to prove the analog of the Buslaev-Faddeev trace formulas for the half-line Schr\"odinger operator with Robin boundary conditions \eqref{erstegl}. Thereby, we follow Yafaev's book "Mathematical Scattering Theory, Analytic Theory"  \cite{Y}, which contains complete proofs in the case of Dirichlet boundary condition. We aim to point out the differences arising from the Robin boundary conditions and to give an interpretation for them.

The outline of this paper is as follows. We consider the differential equation
\begin{equation}\label{eq}
-u'' + V(x) u = z u,\quad \ z=\zeta^2,
\end{equation}
where $\zeta \in \C$ and $x >0$. We are concerned with two particular solutions of this equation, the regular solution $\varphi$ and the Jost solution $\theta$. The first one is characterized by the conditions
\begin{equation}\label{bc2}
\varphi(0, \zeta) = 1, \ \ \varphi'(0, \zeta) = \gamma,
\end{equation}
and the latter one by the asymptotics $\theta(x,\zeta) \sim e^{i\zeta x}$ as $x \to \infty$.

In section 2 we prove existence and uniqueness of the regular solution. The corresponding properties of the Jost solution are well-known. Further, we introduce a quantity $w(\zeta)$, which we call the \emph{Jost function}. We emphasize that this function depends on $\gamma$ and does \textit{not} coincide with what is called the Jost function in the Dirichlet case. More precisely ,  $w(\zeta)$ is defined as the Wronskian of the regular solution and the Jost solution of \eqref{eq}. It turns out that
$$
w(\zeta) = \gamma \theta(0,\zeta) - \theta'(0,\zeta).
$$

Section 3 contains our first main result. Denoting the resolvents of the unperturbed and perturbed operators by $R_0(z)= (H_0-z)^{-1}$ and $R(z) = (H-z)^{-1}$, respectively, we derive an expression for $\tr (R(z) -R_0(z))$ in terms of the Jost function.

\begin{theorem}
Assume that $\int_0^{\infty} |V(x)| \,dx < \infty$. Then
\begin{equation}
\Tr ( R_0(z) - R(z)) = \frac{1}{2\zeta}\left( \frac{\dot{w}(\zeta)}{w(\zeta)} + \frac{i}{\gamma - i\zeta} \right), \quad \zeta = z^{1/2},\ \im \zeta > 0.
\end{equation}
\end{theorem}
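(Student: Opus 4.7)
The strategy is through an explicit computation of the resolvent kernels. For $\im\zeta>0$, the Green's function of $H$ is
\[
G(x,y;z) = \frac{\varphi(\min(x,y),\zeta)\,\theta(\max(x,y),\zeta)}{w(\zeta)},
\]
and the analogous formula holds for $G_0$ with the free ingredients $\varphi_0(x,\zeta) = \cos(\zeta x) + (\gamma/\zeta)\sin(\zeta x)$, $\theta_0(x,\zeta) = e^{i\zeta x}$, and $w_0(\zeta) = \gamma - i\zeta$. Consequently
\[
\Tr(R_0(z) - R(z)) = \int_0^{\infty}\left(\frac{\varphi_0(x,\zeta)\theta_0(x,\zeta)}{w_0(\zeta)} - \frac{\varphi(x,\zeta)\theta(x,\zeta)}{w(\zeta)}\right)dx,
\]
an improper integral that converges because the two diagonal kernels share the same leading large-$x$ behavior.

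The algebraic engine is the pointwise identity
\[
2\zeta\,u_1(x,\zeta)u_2(x,\zeta) = \frac{d}{dx}W[\dot u_1, u_2],
\]
valid for any pair of solutions of $-u'' + Vu = \zeta^2 u$ and verified by differentiating the ODE in $\zeta$. I would apply it to $(u_1,u_2) = (\varphi,\theta)$ and to $(\varphi_0,\theta_0)$ and integrate: since $w$ and $w_0$ are $x$-independent, the trace collapses to boundary terms,
\[
\Tr(R_0(z) - R(z)) = \frac{1}{2\zeta}\left[\frac{W[\dot\varphi_0,\theta_0]}{w_0} - \frac{W[\dot\varphi,\theta]}{w}\right]_{x=0}^{x=\infty}.
\]
The evaluation at $x=0$ is immediate: the normalizations $\varphi(0,\zeta)=1$ and $\varphi'(0,\zeta)=\gamma$ do not depend on $\zeta$, so $\dot\varphi(0,\zeta)=\dot\varphi'(0,\zeta)=0$, and likewise in the free case, giving zero.

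For the boundary at infinity I would decompose $\varphi = A(\zeta)\theta + B(\zeta)\widetilde\theta$, where $\widetilde\theta(x,\zeta)$ is the companion Jost-type solution with leading behavior $e^{-i\zeta x}$; the Wronskian identity $W[\theta,\widetilde\theta]=-2i\zeta$ together with the initial data at $x=0$ fixes $B(\zeta) = -w(\zeta)/(2i\zeta)$. A direct expansion (using $\dot\theta\sim ix\theta$ far out) shows that $W[\dot\varphi,\theta]/w$ has the large-$x$ behavior $ix + (2\zeta)^{-1} - \dot w(\zeta)/w(\zeta) + o(1)$, with an identical expression in the free case. The explicit terms $ix$ and $(2\zeta)^{-1}$ are universal, independent of $V$, so they cancel exactly upon subtraction, leaving
\[
\lim_{x\to\infty}\left[\frac{W[\dot\varphi_0,\theta_0]}{w_0} - \frac{W[\dot\varphi,\theta]}{w}\right] = \frac{\dot w(\zeta)}{w(\zeta)} - \frac{\dot w_0(\zeta)}{w_0(\zeta)} = \frac{\dot w(\zeta)}{w(\zeta)} + \frac{i}{\gamma - i\zeta},
\]
since $\dot w_0/w_0 = -i/(\gamma-i\zeta)$. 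Division by $2\zeta$ yields the claimed formula.

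The main obstacle is precisely this asymptotic matching when $V$ is merely in $L^1$ rather than compactly supported. Individually the bracketed quantity is unbounded in $x$, so one must show that the $V$-independent divergent pieces coincide \emph{exactly} between the perturbed and unperturbed problems. This is where the hypothesis $\int_0^\infty|V|\,dx<\infty$ enters: the standard Volterra integral-equation representations of $\varphi$, $\theta$, and their $\zeta$-derivatives produce errors in the decomposition $\varphi = A\theta+B\widetilde\theta$ that vanish fast enough at infinity for the boundary terms to converge and combine as above. This is the analog for Robin boundary conditions of Yafaev's treatment of the Dirichlet case.
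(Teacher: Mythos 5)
Your computation is a correct variant of the paper's argument, and the two differ in an instructive way. The paper applies the identity $2\zeta\varphi\theta=(\varphi'\dot\theta-\varphi\dot\theta')'$ with the $\zeta$-derivative falling on the \emph{Jost} solution: the boundary term at $x=0$ then immediately produces $\dot w(\zeta)=\gamma\dot\theta(0,\zeta)-\dot\theta'(0,\zeta)$, while at infinity (for compactly supported $V$, where $\theta(x,\zeta)=e^{i\zeta x}$ exactly for large $x$) the term is $(ix+(2\zeta)^{-1})w(\zeta)+o(1)$, and the divergent $ix$ cancels against the explicit free computation. You instead put the $\zeta$-derivative on the \emph{regular} solution, so the $x=0$ term vanishes because the data \eqref{bc2} are $\zeta$-independent, and the whole expression $ix+(2\zeta)^{-1}-\dot w(\zeta)/w(\zeta)$ emerges at infinity. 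I checked your claimed asymptotics (with the convention $W[f,g]=fg'-f'g$, so that $2\zeta u_1u_2=\frac{d}{dx}W[\dot u_1,u_2]$), and together with $-\dot w_0/w_0=i/(\gamma-i\zeta)$ for $w_0(\zeta)=\gamma-i\zeta$ your bookkeeping gives exactly \eqref{Tr}; so the algebraic core is sound and is a legitimate reshuffling of where $\dot w$ appears.

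Where you are materially less complete than the paper is the passage from nice potentials to all $V\in L_1$. The paper carries out the evaluation at infinity only for compactly supported $V$, where it is exact for large $x$, and then extends to \eqref{cond} by a density/continuity argument based on $\sqrt{|V|}(H_0+\mathbbm{1})^{-1/2}$ being Hilbert--Schmidt (citing \cite[Prop.~4.5.3]{Y}). You propose instead to do the large-$x$ matching directly for merely integrable $V$; as you say yourself, this is the main obstacle, and it requires more than you provide: one needs asymptotics with controlled errors for $\dot\varphi$ at infinity, hence for $\dot\theta$ and for the $\zeta$-derivative of the companion solution (the paper's $\tau(x,\zeta)$), as well as for the $\zeta$-derivatives of the coefficients in the decomposition $\varphi=A\theta+B\tau$. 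These estimates are plausible for $\im\zeta>0$ because of the exponential weights, but they do not follow from the Volterra bounds quoted in the paper and are not supplied in your sketch. Either prove those derivative estimates, or adopt the compact-support-plus-density route, which sidesteps them entirely. (Both your argument and the paper's also take as given that $R-R_0$ is trace class and that its trace equals the limit of $\int_0^x$ of the diagonal kernel difference; that input is the same in either approach.)
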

\noindent From this relation we get a representation for the perturbation determinant in terms of $w(\zeta)$.

Section 4 deals with the asymptotic expansion of the perturbation determinant, which we shall use to derive trace identities in Section 5.
For complex numbers $s$, we define the function
$$
M_s(\gamma) := \begin{cases} (-\gamma)^{2s} \ \  &\mbox{if}\  \gamma < 0, \\ 0\ \quad  &\mbox{if}\  \gamma \geq 0. \end{cases} 
$$
Under some regularity and decay assumptions on the potential $V$ we prove infinitely many trace identities. The analogous to \eqref{bf} will be given by
\begin{equation}\label{eq:trafointro}
\sum_{j=1}^N |\lambda_j| - M_1(\gamma) - \frac{2}{\pi}  \int_0^{\infty} \left( \eta(k) -\frac{1}{2k} \int_0^{\infty}V(x)\,dx \right) k \,dk = -\frac14 V(0),
\end{equation}
where $\eta(k)$ is now the corresponding limit phase for the Robin boundary problem. We recall that if $\gamma \geq 0$, then $H_0$ has purely absolutely continuous spectrum $[0, \infty)$. If $\gamma <0$, then $H_0$ has a simple negative eigenvalue $-\gamma^2$ and purely absolutely continuous spectrum on $[0,\infty)$. Hence the first two terms on the left-hand side of \eqref{eq:trafointro}, $\sum_{j=1}^N |\lambda_j| - M_1(\gamma)$, correspond to the shift of the discrete spectrum between $H$ and $H_0$. Similarly, the last term on the left-hand side corresponds to the shift of the absolutely continuous spectrum. The trace formula \eqref{eq:trafointro} and its higher order analogs proved below relate this shift of the spectrum to the potential $V$.

Finally, in Theorem \ref{levinson} we prove a trace formula of order zero. Namely,  the so-called \textit{Levinson formula} for the Schr\"odinger operator $H$ with Robin boundary condition.

{\bf Acknowledgments}. The authors are grateful to A. Laptev, T. Weidl and D. Yafaev for helpful inputs. Many thanks to Rupert Frank for informative discussions and references. 


\section{The regular solution and the Jost solution}
In this section, we prove existence and uniqueness of the regular solution and recall some elementary results on the Jost solution. The $\gamma$-dependent Jost function is studied.

\subsection{The associated Volterra equation and auxiliary estimates}
Existence and uniqueness of the regular solution of \eqref{eq} can be proved by using Volterra integral equations. For different boundary conditions, equation \eqref{eq} is associated with different Volterra integral equations. 

\begin{lemma}\label{regular}
Let $V \in L_1^{(loc)}([0, \infty))$ and consider equation \eqref{eq} on functions $\varphi \in C^1([0, \infty))$, such that $\varphi'$ is absolutely continuous. Then \eqref{eq} with boundary conditions \eqref{bc2} is equivalent to the Volterra equation
\begin{equation}\label{ieq}
\varphi(x, \zeta) = \cos(\zeta x)+ \frac{\gamma}{\zeta} \sin(\zeta x) + \frac{1}{\zeta} \int_0^x \sin(\zeta(x-y)) V(y) \varphi(y, \zeta) \,dy ,
\end{equation}
considered on locally bounded functions $\varphi$.
\end{lemma}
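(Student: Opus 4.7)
The plan is to recognize this as a standard Duhamel/variation-of-parameters computation for the second-order linear ODE $-u'' - \zeta^2 u = -V u$, treating $-V\varphi$ as a forcing term and using the two independent homogeneous solutions $\cos(\zeta x)$ and $\zeta^{-1}\sin(\zeta x)$, whose Wronskian equals $1$. The Green kernel for the initial-value problem with zero data at $x=0$ is
$$
G(x,y)=\frac{\sin(\zeta(x-y))}{\zeta},
$$
so the natural candidate is exactly the right-hand side of \eqref{ieq}, where $\cos(\zeta x)+(\gamma/\zeta)\sin(\zeta x)$ is the unique solution of the homogeneous equation that satisfies the prescribed boundary data \eqref{bc2}.

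For the implication from \eqref{eq}+\eqref{bc2} to \eqref{ieq}, I would assume $\varphi\in C^1([0,\infty))$ with $\varphi'$ absolutely continuous, so $V\varphi\in L_1^{\rm loc}$, and compute directly: set
$$
\psi(x)=\cos(\zeta x)+\frac{\gamma}{\zeta}\sin(\zeta x)+\frac{1}{\zeta}\int_0^x \sin(\zeta(x-y))V(y)\varphi(y,\zeta)\,dy.
$$
A short calculation, differentiating the convolution twice (the boundary terms at $y=x$ vanish for the $\sin$ factor and give $V\varphi$ from the $\cos$ factor), yields $-\psi''+V\varphi=\zeta^2\psi$ with $\psi(0)=1$, $\psi'(0)=\gamma$. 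Hence $\varphi-\psi$ solves the homogeneous linear problem with zero Cauchy data, which forces $\varphi\equiv\psi$ by classical uniqueness for linear ODEs (applied on compacts, since $V\in L_1^{\rm loc}$).

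For the converse, I would start from a locally bounded $\varphi$ satisfying \eqref{ieq}. Local boundedness together with $V\in L_1^{\rm loc}$ makes $V\varphi\in L_1^{\rm loc}$, so the integral in \eqref{ieq} is a well-defined continuous function of $x$; this already forces $\varphi$ to be continuous on $[0,\infty)$. Differentiating \eqref{ieq} once, which is justified since the integrand has the factor $\sin(\zeta(x-y))$ vanishing at $y=x$, produces
$$
\varphi'(x,\zeta)=-\zeta\sin(\zeta x)+\gamma\cos(\zeta x)+\int_0^x\cos(\zeta(x-y))V(y)\varphi(y,\zeta)\,dy,
$$
which exhibits $\varphi'$ as the sum of a continuous function and an absolutely continuous one, hence itself absolutely continuous. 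A further (a.e.)\ differentiation gives $-\varphi''+V\varphi=\zeta^2\varphi$, and evaluation at $x=0$ recovers \eqref{bc2}.

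The main technical point, and the only place one has to be careful, is the differentiation of the Volterra integral in the low-regularity setting $V\in L_1^{\rm loc}$: one must use the vanishing of the kernel $\sin(\zeta(x-y))/\zeta$ at $y=x$ to justify the first differentiation without an a.e.\ exception, so that $\varphi'$ is continuous rather than merely defined almost everywhere, after which the second differentiation (producing an $L_1^{\rm loc}$ derivative) proceeds by the Lebesgue differentiation theorem.
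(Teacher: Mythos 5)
Your proof is correct, and the converse direction (from \eqref{ieq} to \eqref{eq} and \eqref{bc2}) coincides with the paper's: differentiate the Volterra equation once, using the vanishing of the kernel at $y=x$, to get the expression \eqref{abl} for $\varphi'$, observe absolute continuity, differentiate a.e.\ and evaluate at $x=0$. (Your a.e.\ differentiation correctly produces the boundary term $V(x)\varphi(x,\zeta)$, which is exactly what combines with \eqref{ieq} to give \eqref{eq}; the paper's display \eqref{abl2} omits this term, evidently a typo.) The forward direction is where you genuinely deviate: the paper substitutes $V\varphi=\varphi''+\zeta^2\varphi$ into the Volterra integral and integrates by parts twice, so that \eqref{ieq} drops out directly from the boundary conditions \eqref{bc2} with no auxiliary uniqueness statement; you instead define $\psi$ as the right-hand side of \eqref{ieq}, verify by the variation-of-parameters computation that it satisfies the same Cauchy problem $-\psi''+V\varphi=\zeta^2\psi$, $\psi(0)=1$, $\psi'(0)=\gamma$ (with the \emph{same} forcing $V\varphi$), and conclude $\varphi\equiv\psi$ because the difference solves $-u''=\zeta^2 u$ with zero Cauchy data. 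This buys conceptual clarity — the potential drops out of the subtraction, so only uniqueness for the constant-coefficient free equation is needed, which is elementary (Gronwall on the first-order system) — at the cost of an extra uniqueness step that the paper's direct integration-by-parts computation avoids; both arguments handle the low regularity $V\in L_1^{(loc)}$ equally well, since in each case the only analytic input is that an indefinite integral of an $L_1^{(loc)}$ function is absolutely continuous.
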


\begin{proof}
Suppose that equation \eqref{eq} holds for $\varphi$. Then the equality
$$
\int_0^x \zeta^{-1} \sin(\zeta(x-y))V(y) \varphi(y,\zeta) \,dy = \int_0^x \zeta^{-1} \sin(\zeta(x-y))\left(\varphi''(y,\zeta) + \zeta^2 \varphi(y,\zeta)\right) \,dy
$$
is true.
We integrate the right-hand side twice by parts. Taking into account boundary conditions \eqref{bc2}, we see that the right-hand side equals $ \varphi(x,\zeta) - \cos(\zeta x)- \frac{\gamma}{\zeta} \sin(\zeta x).$ Thus equation \eqref{ieq} follows.
Conversely, assume that equation \eqref{ieq} holds. Then $\varphi \in C_{loc}^1([0,\infty))$ and
\begin{equation}\label{abl} 
\varphi'(x,\zeta) = - \zeta \sin(\zeta x) + \gamma \cos(\zeta x) + \int_0^x  \cos(\zeta(x-y))V(y) \varphi(y,\zeta) \,dy.
\end{equation}
Therefore $\varphi'$ is absolutely continuous and 
\begin{equation}\label{abl2} 
\varphi''(x,\zeta) = -\zeta^2 \cos(\zeta x) - \gamma \zeta \sin(\zeta x) - \zeta \int_0^x  \sin(\zeta(x-y))V(y) \varphi(y,\zeta) \,dy.
\end{equation}
Comparing \eqref{abl2} with \eqref{ieq}, we obtain equation \eqref{eq}. Inserting $x=0$ in \eqref{ieq} and \eqref{abl} for $\varphi(x,\zeta)$ and $\varphi'(x,\zeta)$, we see that boundary conditions \eqref{bc2} are fullfilled.
\end{proof}

In the following Lemma it is proved that the regular solution $\var$  of \eqref{eq} with boundary conditions \eqref{bc2} exists uniquely. For the case of Dirichlet boundary condition, this result was represented e.g. by Yafaev in \cite{Y}. 

\begin{lemma}\label{regularsolution}
Let $ V \in L_1^{(loc)} ([0, \infty)).$ Then for all $\zeta \in \C$, equation \eqref{eq} has a unique solution $\varphi(x,\zeta)$ satisfying \eqref{bc2}. For any fixed $x\geq 0, \ \var = \varphi(x,-\zeta)$ is an entire function of the variable $z=\zeta^2$. Moreover, for $\gamma \neq 0$ we have the estimate
\begin{equation}\label{unique}
\left | \varphi(x,\zeta) - \cos (\zeta x) -\frac{\gamma}{\zeta} \sin (\zeta x) \right | \leq \tilde{c} | \gamma| x e^{| \im \zeta | x} \left( \exp\left(\frac{c \int_0^x |V(y)| (1+| \gamma| y) \,dy}{| \gamma|} \right) -1 \right).
\end{equation}
If $\gamma = 0$, then the estimate
\begin{equation}\label{unique2}
\left | \varphi(x,\zeta) - \cos (\zeta x) \right | \leq \tilde{c} e^{| \im \zeta | x} \left( \exp\left( c x \int_0^x |V(y)| \,dy \right) -1 \right)
\end{equation}
holds.
\end{lemma}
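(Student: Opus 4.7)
The plan is to apply the method of successive approximations to the Volterra equation \eqref{ieq} of Lemma \ref{regular}. Setting $\psi_0(x,\zeta):=\cos(\zeta x)+\gamma\zeta^{-1}\sin(\zeta x)$ and recursively
$$\psi_{n+1}(x,\zeta):=\zeta^{-1}\int_0^x\sin(\zeta(x-y))\,V(y)\,\psi_n(y,\zeta)\,dy,$$
I would show that the Neumann series $\varphi=\sum_{n\ge 0}\psi_n$ converges locally uniformly on $[0,\infty)\times\C$ and solves \eqref{ieq}, hence \eqref{eq}--\eqref{bc2}. Since $\cos(\zeta x)$ and $\zeta^{-1}\sin(\zeta x)$ are both entire and even in $\zeta$, and the recursion preserves this parity, each $\psi_n$---and therefore $\varphi$---is entire in $z=\zeta^2$. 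Uniqueness is a short Gronwall-type argument: any two solutions of \eqref{ieq} differ by a function obeying the associated homogeneous Volterra equation, whose iterated bounds carry a $1/n!$ factor and so force the difference to vanish on every compact subinterval.

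The substance of the proof is the inductive bound on $\psi_n$. From $|\cos(\zeta x)|\le e^{|\im\zeta|x}$ and $|\zeta^{-1}\sin(\zeta x)|\le x\,e^{|\im\zeta|x}$ one gets $|\psi_0(x,\zeta)|\le(1+|\gamma|x)e^{|\im\zeta|x}$. For $\gamma\neq 0$, abbreviating $A(x):=\int_0^x|V(y)|(1+|\gamma|y)\,dy$, I would then prove by induction
$$|\psi_n(x,\zeta)|\le x\,e^{|\im\zeta|x}\,\frac{A(x)^n}{n!\,|\gamma|^{n-1}}, \qquad n\ge 1.$$
Summing yields $\sum_{n\ge 1}|\psi_n|\le |\gamma|\,x\,e^{|\im\zeta|x}\bigl(e^{A(x)/|\gamma|}-1\bigr)$, which is exactly \eqref{unique}. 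The case $\gamma=0$ runs in parallel, starting from $|\psi_0|\le e^{|\im\zeta|x}$ and producing $|\psi_n(x,\zeta)|\le e^{|\im\zeta|x}\bigl(x\int_0^x|V(y)|\,dy\bigr)^n/n!$ (via the coarse inequality $(x-y)y^n\le x^{n+1}$), whose sum is \eqref{unique2}.

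The main obstacle is closing the $\gamma\neq 0$ induction with precisely the factor $|\gamma|^{-(n-1)}$ that appears inside the exponential in \eqref{unique}. The key device is the elementary inequality $y/(1+|\gamma|y)\le 1/|\gamma|$, which lets one rewrite the integrand $|V(y)|\,y$ appearing at the inductive step as at most $A'(y)/|\gamma|$. With this substitution the step collapses to the telescoping identity $\int_0^x A(y)^n A'(y)\,dy=A(x)^{n+1}/(n+1)$, yielding simultaneously the correct power of $|\gamma|$ in the denominator and the $1/(n+1)!$ factorial needed to reconstitute an exponential. Once this combinatorial bookkeeping is in place, the passage from the individual bounds on $\psi_n$ to the stated estimates \eqref{unique} and \eqref{unique2}, together with the local uniform convergence that delivers entirety in $z$, is routine.
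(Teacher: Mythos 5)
Your proposal is correct and follows essentially the same route as the paper: the same successive-approximation scheme for the Volterra equation, the same inductive bound with the $|\gamma|^{-(n-1)}$ factor closed via $y\le|\gamma|^{-1}(1+|\gamma|y)$ and the telescoping integral $\int_0^x A(y)^nA'(y)\,dy=A(x)^{n+1}/(n+1)$, and the same uniqueness (homogeneous Volterra iteration) and analyticity (uniform limits of entire functions of $\zeta^2$) arguments. No substantive differences to report.
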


\begin{proof}
We construct a solution of integral equation \eqref{ieq}, which by Lemma \ref{regular} is equivalent to the solution of \eqref{eq}. 
Set $\varphi_0(x,\zeta) = \cos (\zeta x) + \frac{\gamma}{\zeta} \sin (\zeta x)$,
\begin{equation}\label{eins}
\varphi_{n+1} (x,\zeta) = \int_0^x \zeta^{-1} \sin(\zeta(x-y))V(y) \varphi_n(y,\zeta) \,dy  ,\quad n \geq 0.
\end{equation}
Inductively one shows that all $\varphi_n(x,\zeta)$ are entire functions of $\zeta^2$. First, we consider the case when $\gamma \neq 0$. Using the estimates

\begin{equation}\label{obvious}
\left | \frac{ \sin(\zeta (x-y))}{\zeta} \right | \leq c |x-y| e^ {| \im \zeta | (x-y)} \quad \mbox{and} \quad  \left | \cos(\zeta x) + \frac{\gamma}{\zeta} \sin(\zeta x) \right | \leq \tilde{c}   e^{| \im \zeta | x} ( 1+ |\gamma| x) ,
\end{equation}
we obtain
\begin{equation}\nonumber
\left | \varphi_1 (x, \zeta ) \right | \leq c \tilde{c} x e^{| \im \zeta | x}   \int_0^x |V(y)| (1+\left | \gamma \right | y) \,dy.
\end{equation}
Successively, we have
\begin{equation}\label{zwei}
\left | \varphi_n (x, \zeta ) \right | \leq \frac{c^{n}\tilde{c}}{n! \left | \gamma \right |^{ n-1}} x e^{| \im \zeta | x} \left( \int_0^x |V(y)| (1+\left | \gamma \right | y) \,dy \right)^n,
\end{equation}
which follows by an induction argument. Indeed, it follows from \eqref{eins}, \eqref{obvious} and \eqref{zwei}, that
\begin{eqnarray}\label{y}
\left | \varphi_{n+1} (x, \zeta ) \right | \leq \frac{c^{n+1}\tilde{c}}{n! \left | \gamma \right |^{ n-1}} e^{| \im \zeta | x} \int_0^x (x-y)y |V(y)| \left( \int_0^y |V(t)| (1+ \left | \gamma \right | t)  \,dt \right)^n \,dy.
\end{eqnarray}
As $\left | \gamma\right  |> 0$, we have $y \leq \left | \gamma \right |^{-1} (1+ |\gamma| y)$. Thus, the right-hand side in equation \eqref{y} is bounded by
\begin{eqnarray}\nonumber
\frac{c^{n+1}\tilde{c}}{ n! |\gamma|^{ n}} e^{| \im \zeta | x} \int_0^x (1+|\gamma| y) (x-y) |V(y)| \left( \int_0^y |V(t)| (1+|\gamma| t)  \,dt \right)^n \,dy, 
\end{eqnarray}
which is the same as 
\begin{eqnarray}\nonumber
\frac{c^{n+1}\tilde{c}}{n! | \gamma|^{ n}} e^{| \im \zeta | x} \int_0^x \frac{(x-y)}{n+1} \frac{d}{dy} \left( \int_0^y |V(t)| (1+|\gamma| t)  \,dt \right)^{n+1} \,dy.
\end{eqnarray}
Finally, the last term is bounded from above by
\begin{equation}\nonumber
\frac{c^{n+1}\tilde{c}}{(n+1)! | \gamma|^{ n}} e^{| \im \zeta | x} x  \left( \int_0^x |V(y)| (1+|\gamma| y)  \,dy \right)^{n+1} .
\end{equation}
Thus, the limit 
\begin{equation}\label{limit3}
\varphi(x,\zeta):= \lim_{N \to \infty} \sum_{n=0}^N \varphi_n (x,\zeta)
\end{equation} 
exists uniformly for bounded $\zeta$, $x$ and $|\gamma| >0$. Putting together definitions \eqref{eins} and \eqref{limit3}, we see that
$$
\sum_{n=0}^N \varphi_n(x,\zeta) =  \cos (\zeta x) + \frac{\gamma}{\zeta} \sin (\zeta x) - \varphi_{N+1}(x,\zeta) +  \int_0^x \zeta^{-1} \sin(\zeta(x-y))V(y) \left( \sum_{n=0}^N \varphi_n(y,\zeta) \right) \,dy.
$$ 
From this equation, we obtain in the limit $N \to \infty$ equation \eqref{ieq}. To prove estimate \eqref{unique}, we consider
\begin{equation}\label{summe}
\left| \varphi(x,\zeta) - \cos(\zeta x) -\frac{\gamma}{\zeta} \sin(\zeta x)\right| = \left |  \lim_{N \to \infty} \sum_{n=1}^N \varphi_n (x,\zeta) \right|.
\end{equation}
Because of \eqref{zwei}, the right-hand side in \eqref{summe} is bounded from above by

\begin{eqnarray}\nonumber
\tilde{c} |\gamma| x e^{ | \im \zeta |x} \sum_{n=1}^{\infty} \frac{1}{n!}\left(\frac{c \int_0^x |V(y)| (1+|\gamma| y) \,dy}{|\gamma|} \right)^n = \tilde{c} |\gamma| x e^{ | \im \zeta |x}  \left( \exp\left(\frac{c \int_0^x |V(y)| (1+|\gamma| y) \,dy}{|\gamma|} \right) -1\right).
\end{eqnarray}
If $\gamma =0$, then we use the same estimates \eqref{obvious} and get successively,
\begin{equation}\nonumber
| \varphi_n(x,\zeta) | \leq \frac{c^n \tilde{c}x^n}{n!} e^{|\im \zeta| x} \left( \int_0^x |V(y)|  \,dy \right)^n.
\end{equation}
From this estimate it follows, that
$$
\left| \varphi(x,\zeta) - \cos(\zeta x) \right| \leq \tilde{c}  e^{| \im \zeta |x} \sum_{n=1}^{\infty} \frac{1}{n!} \left( c x  \int_0^x |V(y)|  \,dy \right)^n = \tilde{c}  e^{| \im \zeta |x} \left( \exp \left(cx \int_0^{x} |V(y)| \,dy \right) -1 \right),
$$
which proves estimate \eqref{unique2}.
The uniqueness of a bounded solution of equation \eqref{ieq} can be proved by contradiction. Suppose that $\varphi_1$ and $\varphi_2$ are two different solutions of equation \eqref{ieq}. Then $\varphi_1- \varphi_2$ satisfies the corresponding homogeneous equation and is bounded for an arbitrary $n$, by the right-hand side of \eqref{zwei} and hence is zero. Therefore $\varphi_1= \varphi_2$.
\end{proof}

\subsection{The Jost solution and the Jost function}

The so-called Jost solution, which was first studied by Jost, is important in scattering theory. This solution of equation \eqref{eq} is characterized by the asymptotics $\tet \sim e^{i \zeta x}$ as $x\to \infty$.
It is proved, e.g., in \cite{Y}, that under the assumption 
\begin{equation}\label{cond}
\int_0^{\infty} |V(x)| \,dx < \infty,
\end{equation}
equation \eqref{eq} has for all $\zeta \neq 0,\ \im \zeta >0$, a unique solution $\tet$ satisfying as $x \to \infty$ the conditions 
\begin{equation}\label{asympto}
\tet = e^{i \zeta x} (1 + o(1)),\quad \theta'(x,\zeta) = i\zeta e^{i \zeta x} (1+o(1)).
\end{equation}
For any fixed $ x \geq 0$, the function $\tet$ is analytic in $\zeta$ in the upper half-plane $\im \zeta > 0$ and continuous in $\zeta$ up to the real axis with a possible exception of the point $\zeta = 0$. Moreover, it satisfies the estimates
$$
| \tet - e^{i \zeta x}| \leq e^{- \im \zeta x} \left(\exp(|\zeta|^{-1} \int_x^{\infty} |V(y)| \,dy) -1\right)
$$ 
and consequently, for  $|\zeta| \geq c >0$,
\begin{equation}\label{yafaev}
| \tet - e^{i \zeta x}| \leq C |\zeta|^{-1} e^{- \im \zeta x} \int_x^{\infty} |V(y)| \,dy,
\end{equation}
where $C$ depends on $c$ and the value of the integral \eqref{cond} only. We will need an analog of estimate \eqref{yafaev} for the derivative of the Jost solution.
\begin{lemma}\label{appendix}
Assume condition \eqref{cond} and  $\zeta \neq 0,\ \im \zeta >0$. Then for the derivative of the solution $\tet$ with asymptotics \eqref{asympto}, the following estimate holds
\begin{equation}\label{eigeneabschatzung}
| \theta'(x,\zeta) - i\zeta e^{i \zeta x}| \leq e^{- \im \zeta x} | \zeta| \left(\exp\left(|\zeta|^{-1} \int_x^{\infty} |V(y)| \,dy\right) -1\right).
\end{equation}
Moreover, for $|\zeta| \geq k >0$, we have

\begin{equation}\label{schluss}
| \theta'(x,\zeta) - i\zeta e^{i \zeta x}| \leq K e^{- \im \zeta x} \int_x^{\infty} |V(y)| \,dy,
\end{equation}
where $K$ depends only on $k$ and the value of the integral \eqref{cond}.
\end{lemma}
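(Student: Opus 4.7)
The plan is to start from the Volterra integral equation for the Jost solution underlying the estimate \eqref{yafaev}, namely
\begin{equation*}
\theta(x,\zeta) = e^{i\zeta x} + \int_x^{\infty} \zeta^{-1}\sin(\zeta(y-x))\,V(y)\,\theta(y,\zeta)\,dy,
\end{equation*}
and differentiate in $x$. The boundary contribution at $y=x$ vanishes because $\sin 0 = 0$, so differentiation under the integral sign gives
\begin{equation*}
\theta'(x,\zeta) - i\zeta e^{i\zeta x} = -\int_x^{\infty} \cos(\zeta(y-x))\,V(y)\,\theta(y,\zeta)\,dy.
\end{equation*}
This is the identity on which everything else rests; I would verify its validity by noting that the series defining $\theta$ (analogous to \eqref{eins}--\eqref{limit3}) converges uniformly on $[x_0,\infty)$ together with its term-by-term $x$-derivative.

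The next step is a pointwise bound on the kernel in the upper half-plane. For $y\geq x$ and $\im\zeta>0$ one has the identity $\cos(\zeta(y-x))\,e^{i\zeta(y-x)} = \tfrac{1}{2}\bigl(e^{2i\zeta(y-x)}+1\bigr)$, whose right-hand side is bounded in modulus by $1$ since $|e^{2i\zeta(y-x)}|=e^{-2\im\zeta(y-x)}\leq 1$. Hence
\begin{equation*}
|\cos(\zeta(y-x))| \leq e^{\im\zeta(y-x)}, \qquad y\geq x.
\end{equation*}
Combining this with the bound for $\theta$ stated just before \eqref{yafaev}, i.e. $|\theta(y,\zeta)|\leq e^{-\im\zeta y}\exp\bigl(|\zeta|^{-1}\int_y^{\infty}|V(t)|\,dt\bigr)$, I obtain
\begin{equation*}
|\theta'(x,\zeta)-i\zeta e^{i\zeta x}| \leq e^{-\im\zeta x}\int_x^{\infty} |V(y)|\,\exp\!\Bigl(|\zeta|^{-1}\!\int_y^{\infty}|V(t)|\,dt\Bigr)\,dy.
\end{equation*}

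To finish \eqref{eigeneabschatzung}, I would recognize the integrand on the right as a total derivative: with $\Phi(y):=|\zeta|^{-1}\int_y^{\infty}|V(t)|\,dt$ one has $|V(y)|\,e^{\Phi(y)} = -|\zeta|\,\frac{d}{dy}e^{\Phi(y)}$, and integrating from $x$ to $\infty$ yields precisely $|\zeta|\bigl(\exp(|\zeta|^{-1}\int_x^{\infty}|V|dy)-1\bigr)$. For the refined estimate \eqref{schluss}, I would apply the elementary inequality $e^t - 1 \leq t\,e^t$ with $t=|\zeta|^{-1}\int_x^{\infty}|V|dy$, and use that for $|\zeta|\geq k$ the factor $\exp(|\zeta|^{-1}\int_0^{\infty}|V|dy)$ is bounded by a constant depending only on $k$ and $\int_0^\infty|V|dx$; then $|\zeta|\cdot t$ collapses to $\int_x^\infty |V|dy$, giving the claim with $K$ absorbing the exponential constant.

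The only genuine difficulty is the kernel bound in the second paragraph: the naive estimate $|\cos(\zeta(y-x))|\leq \tfrac12(e^{-\im\zeta(y-x)}+e^{\im\zeta(y-x)})$ is too crude since the second term grows, and one must exploit the cancellation between the cosine and the exponential decay coming from $\theta$ via the identity above. Once that is handled, the rest is a direct computation mirroring the argument for \eqref{yafaev}.
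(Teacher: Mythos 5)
Your argument is correct and is essentially the paper's own proof in slightly repackaged form: the identity $\theta'(x,\zeta)-i\zeta e^{i\zeta x}=-\int_x^{\infty}\cos(\zeta(y-x))V(y)\theta(y,\zeta)\,dy$ is exactly what the paper extracts from the $b_n$-series (its kernel $\tfrac12\left(e^{2i\zeta(y-x)}+1\right)$ is just $e^{i\zeta(y-x)}\cos(\zeta(y-x))$), and your bound $|\cos(\zeta(y-x))|\le e^{\im\zeta\,(y-x)}$ combined with the total-derivative summation reproduces the paper's term-by-term use of \eqref{bn}. One small remark: the step you flag as the "only genuine difficulty" involves no real cancellation of oscillations --- the growth $e^{\im\zeta\,(y-x)}$ is exactly offset by the decay $e^{-\im\zeta\,y}$ in the standing bound for $\theta$, which is precisely the product of bounds you in fact use.
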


The proof of this Lemma follows closely the arguments of \cite{Y}. For the sake of completeness, we provide the necessary modifications in the appendix.

Next, we study some properties of the $\gamma$-dependent Jost function.
Below we suppose that condition \eqref{cond} is satisfied and that $\im \zeta \geq 0$.
\begin{definition}
We denote by
\begin{equation}\label{jostfct}
w(\zeta) := \varphi'(x, \zeta) \tet - \theta'(x, \zeta) \var
\end{equation}
the Wronskian of the regular solution and the Jost solution of the Schr\"odinger equation \eqref{eq}. The Wronskian $w(\zeta)$ is called \textit{Jost function}. 
\end{definition}
Setting $x=0$ in \eqref{jostfct}, we see that
\begin{equation}\label{w}
w(\zeta) = \gamma \theta(0,\zeta) - \theta' (0, \zeta).
\end{equation}
This is the definition of $w(\zeta)$, that was used in the introduction. The Jost function $w(\zeta)$ is analytic in $\zeta$ in the upper halfplane $\im \zeta >0$ and is continuous in $\zeta$ up to the real axis, with a possible exception of the point $\zeta = 0$. Moreover, it follows from \eqref{yafaev} and \eqref{schluss} that
\begin{equation}\label{ruscho}
w(\zeta)=  - i \zeta  + O(1),\quad |\zeta | \to \infty, \quad \im \zeta \geq 0.
\end{equation}

\begin{remark}
Usually, in the literature the Wronskian $w_D$ of the Jost solution and the regular solution satisfying a \emph{Dirichlet} boundary condition is called Jost function. In our case of Robin boundary condition \eqref{bc2}, the Wronskian differs from the usual one and depends on $\gamma$. We emphasize, that for every $\gamma \in \R$, the function $w(\zeta)$ grows linearly in $\zeta$ as $|\zeta| \to \infty$, whereas in the Dirichlet case we have for the corresponding Jost function $w_D(\zeta) = 1+O(|\zeta|^{-1}), \ |\zeta| \to \infty$. 
\end{remark}

Our next goal is to give an integral representation for $w(\zeta)$.
\begin{lemma}\label{lemma_w}
For $\im \zeta \geq 0,\ \zeta \neq 0$, the following representation for the Jost function holds
\begin{equation}\label{represantation_w}
w(\zeta) = \gamma - i \zeta + \int_0^{\infty} e^{i\zeta y} V(y) \varphi(y,\zeta) \,dy.
\end{equation}
\end{lemma}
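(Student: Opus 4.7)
The plan is to multiply the eigenvalue equation $-\varphi''+V\varphi=\zeta^{2}\varphi$ by $e^{i\zeta x}$, integrate over $[0,R]$, and integrate by parts twice. The contributions proportional to $\zeta^{2}$ cancel; using $\varphi(0,\zeta)=1$, $\varphi'(0,\zeta)=\gamma$, the boundary terms at $x=0$ combine to give
\begin{equation*}
\int_{0}^{R}e^{i\zeta y}V(y)\varphi(y,\zeta)\,dy=\bigl[\varphi'(R,\zeta)-i\zeta\,\varphi(R,\zeta)\bigr]e^{i\zeta R}+i\zeta-\gamma.
\end{equation*}
I would then let $R\to\infty$ and identify the remaining boundary term at $R$ with $w(\zeta)$.

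For that identification, the key input is that the Wronskian $W(\varphi,\theta)(R)=\varphi'(R,\zeta)\theta(R,\zeta)-\varphi(R,\zeta)\theta'(R,\zeta)$ is independent of $R$ and equals $w(\zeta)$. A short rearrangement yields
\begin{equation*}
\bigl[\varphi'(R,\zeta)-i\zeta\varphi(R,\zeta)\bigr]e^{i\zeta R}-w(\zeta)=\varphi'(R,\zeta)\bigl[e^{i\zeta R}-\theta(R,\zeta)\bigr]+\varphi(R,\zeta)\bigl[\theta'(R,\zeta)-i\zeta e^{i\zeta R}\bigr].
\end{equation*}
The Jost estimates \eqref{yafaev} and \eqref{schluss} bound each bracketed factor by a constant times $e^{-\im\zeta R}\int_{R}^{\infty}|V(y)|\,dy$, which vanishes as $R\to\infty$ by assumption \eqref{cond}. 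Absolute convergence of the improper integral on the left-hand side follows in the same spirit, since $|e^{i\zeta y}\varphi(y,\zeta)|$ turns out to be bounded while $V\in L_{1}(0,\infty)$.

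The main obstacle is that estimates \eqref{unique}--\eqref{unique2} bound $|\varphi(x,\zeta)|$ only by $C(1+|\gamma|x)e^{\im\zeta x}$, whose polynomial prefactor would prevent the two cross terms above from going to zero. I would therefore, before passing to the limit, derive the sharper \emph{polynomial-free} exponential bound $|\varphi(x,\zeta)|\leq C(\zeta)e^{\im\zeta x}$ (and similarly for $\varphi'$) valid for any fixed $\zeta\ne 0$ with $\im\zeta\geq 0$. This I would obtain by returning to the Volterra equation \eqref{ieq}, using the refined inequality $|\sin(\zeta(x-y))/\zeta|\leq|\zeta|^{-1}e^{\im\zeta(x-y)}$ in place of the cruder $(x-y)e^{\im\zeta(x-y)}$ and the companion estimate $|\varphi_{0}(x,\zeta)|\leq(1+|\gamma|/|\zeta|)e^{\im\zeta x}$, followed by Gronwall's lemma applied to $u(x):=|\varphi(x,\zeta)|e^{-\im\zeta x}$. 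With this control in hand, both the absolute convergence of the left-hand integral and the vanishing of the boundary difference follow immediately, completing the identification of $w(\zeta)$ with $\gamma-i\zeta+\int_{0}^{\infty}e^{i\zeta y}V(y)\varphi(y,\zeta)\,dy$.
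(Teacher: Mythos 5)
Your argument is correct, and it reaches the key identity $\lim_{R\to\infty}e^{i\zeta R}\bigl(\varphi'(R,\zeta)-i\zeta\varphi(R,\zeta)\bigr)=w(\zeta)$ by a genuinely different route than the paper. The integration-by-parts computation over $[0,R]$ is the same in both proofs; the difference is the identification of the boundary term at infinity. The paper introduces a second solution $\tau(x,\zeta)=-2i\zeta\,\theta(x,\zeta)\int_{x_0}^x\theta(y,\zeta)^{-2}dy$ with $\tau\sim e^{-i\zeta x}$, writes $\varphi$ as the combination \eqref{varphi_tau} of $\theta$ and $\tau$, reads off the limit \eqref{limit2} for $\im\zeta>0$ (where $e^{2i\zeta x}\to0$ kills the $\theta$-component), and then extends \eqref{represantation_w} to the real axis by continuity. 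You instead exploit the $x$-independence of the Wronskian $W(\varphi,\theta)=w(\zeta)$ and control the difference $\bigl[\varphi'(R)-i\zeta\varphi(R)\bigr]e^{i\zeta R}-w(\zeta)$ quantitatively through the Jost estimates \eqref{yafaev} and \eqref{schluss}; you correctly notice that the bounds \eqref{unique}--\eqref{unique2} on $\varphi$ carry a polynomial prefactor that would spoil this (under \eqref{cond} alone, $R\int_R^\infty|V|\,dy$ need not vanish), and you repair it by rerunning the Volterra iteration \eqref{ieq} with $|\sin(\zeta(x-y))/\zeta|\le|\zeta|^{-1}e^{\im\zeta(x-y)}$ and Gronwall to get $|\varphi(x,\zeta)|\le C(\zeta)e^{\im\zeta x}$, and likewise for $\varphi'$ via \eqref{abl}. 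That refined bound is indeed available for each fixed $\zeta\neq0$, $\im\zeta\ge0$, and it also gives the absolute convergence of $\int_0^\infty e^{i\zeta y}V\varphi\,dy$. What each approach buys: yours is uniform in $\im\zeta\ge0$, so it proves \eqref{represantation_w} directly on the real axis without a separate continuity argument, at the price of the extra Gronwall lemma; the paper's is softer (no refined estimate on $\varphi$ needed), but works only for $\im\zeta>0$ in the first instance and requires the continuity extension to reach real $\zeta\neq0$.
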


The proof of this Lemma relies on the following formula. For $\im \zeta >0$,
\begin{equation} \label{limit2}
\lim_{x \to \infty} e^{i\zeta x} \left( \varphi'(x,\zeta) -i \zeta \varphi(x,\zeta) \right) = w(\zeta).
\end{equation}
To show this, one can introduce, as in \cite{Y}, for all $\zeta$ with $\im \zeta>0$ a solution of equation \eqref{eq}, which is linearly independent of $\theta(x,\zeta)$. Set
$$
\tau(x,\zeta) = -2i\zeta \theta(x,\zeta) \int_{x_0}^x \theta(y,\zeta)^{-2} \,dy,\quad x \geq x_0,
$$
where $x_0=x_0(\zeta)$ is chosen such that $\theta(x,\zeta) \neq 0$ for all $x \geq x_0$. Then $\tau(x,\zeta)$ satisfies equation \eqref{eq} and according to \eqref{asympto},
$$
\tau(x,\zeta) = e^{-i\zeta x} (1+o(1)),\quad \tau'(x,\zeta) = -i\zeta e^{-i\zeta x} (1+o(1)),
$$ 
as $ x\to \infty$. Since $W\{ \theta(\zeta), \tau(\zeta) \} = 2i\zeta$, we find that 
\begin{equation} \label{varphi_tau}
\varphi(x,\zeta)= \frac{1}{2i\zeta} \left( (\gamma \tau(0,\zeta) -\tau'(0,\zeta)) \theta(x,\zeta) - (\gamma \theta(0,\zeta) - \theta'(0,\zeta))\tau(x,\zeta) \right).
\end{equation}
Equation \eqref{limit2} now follows from \eqref{varphi_tau}. Given \eqref{limit2}, we can prove Lemma \ref{lemma_w}.\\

\begin{proof}[Proof of Lemma \ref{lemma_w}]
The differential equation \eqref{eq} implies that
$$
\int_0^ x e^{i\zeta y} V(y) \varphi(y,\zeta) \,dy = \int_0^ x e^{i\zeta y} \varphi''(y,\zeta) \,dy + \zeta^2 \int_0^x e^{i\zeta y} \varphi(y,\zeta) \,dy .
$$
We integrate the first integral in the right-hand side twice by parts and get
$$
\int_0^x e^{i\zeta y} V(y) \varphi(y,\zeta) \,dy = e^{i\zeta x} \left( \varphi'(x,\zeta) -i \zeta \varphi(x,\zeta) \right) -\gamma +i \zeta.
$$
Passing to the limit $x \to \infty$ in the above equation and using \eqref{limit2}, we arrive at \eqref{represantation_w} for $\im \zeta > 0$. By continuity, \eqref{represantation_w} can be extended to the real axis.
\end{proof}

As the Jost solution of equation \eqref{eq} is unique, it follows that
\begin{equation}\label{regeln}
\theta(x,\zeta) = \overline{\theta(x,-\overline{\zeta})}, \ \  \theta'(x,\zeta) = \overline{\theta'(x,-\overline{\zeta})}\quad \mbox{and hence} \quad w(\zeta)= \overline{w(-\overline{\zeta})}.
\end{equation}
For real numbers $k>0$ both Jost solutions $\theta(x,k)$ and $\theta(x,-k)$ of the equation
\begin{equation}\label{realeq}
-u'' + V(x) u = k^2 u,\quad k>0,
\end{equation}
are correctly defined and their Wronskian $W\{ \theta(\cdot,k),\theta(\cdot, -k) \}$ equals $2ik$. Thus, they are linearly independent. In particular, we get from \eqref{regeln},
\begin{equation}\label{realw}
\theta(x,-k)= \overline{\theta(x,k)} \quad \mbox{and hence} \quad w(-k) = \overline{w(k)}.
\end{equation}
It is useful to express the regular solution in terms of the Jost solutions as follows,
\begin{eqnarray}\label{real}
\varphi(x,k) = \frac{1}{2ik}\left( \theta(x,k) w(-k) - \theta(x,-k)w(k) \right).
\end{eqnarray}
Indeed, it is easy to verify that the right-hand side of \eqref{real} satisfies equation \eqref{realeq} and conditions \eqref{bc2}.

Now, we introduce the limit amplitude and phase shift for real values of $k$.
\begin{definition}
Set
\begin{equation}\label{nurw}
w(k) = a(k) e^{i \eta(k)} (\gamma-ik), \quad a(k) = \frac{|w(k)|}{\sqrt{\gamma^2 + k^2}}.
\end{equation}
The functions $a(k)$ and $\eta(k)$ are called the limit amplitude and the limit phase, respectively.
\end{definition}
These functions determine the asymptotics of the regular solution of the Schr\"odinger equation as $x \to \infty$. Indeed, comparing \eqref{asympto} and \eqref{real}, we find
\begin{eqnarray}\nonumber
\varphi(x,k) & =& \frac{1}{2ik}\left( e^{ikx} w(-k) - e^{-ikx}w(k) \right) + o(1),\quad x\to \infty.
\end{eqnarray}
Furthermore,
$$
w(-k) = \overline{w(k)}= a(k) e^{-i \eta(k)} (\gamma + ik).
$$
Thus,
\begin{equation}\nonumber
\varphi(x,k) = a(k) \frac{1}{2ik} \left( (\gamma+ik) e^{i(kx-\eta(k))}  - (\gamma-ik) e^{-i(kx-\eta(k))} \right) + o(1), \quad x \to \infty. 
\end{equation}
This asymptotic behavior should be compared with the exact expression for the solution $\varphi_0(x,\zeta)$ of the equation $-\varphi'' = \zeta^2 \varphi$ satisfying the conditions \eqref{bc2}, namely,
$$
\varphi_0(x,\zeta) = (2i\zeta)^{-1} ((\gamma + i\zeta) e^{i\zeta x} - (\gamma-i\zeta)e^{-i\zeta x}).
$$

Finally, we note, that
\begin{equation}\label{nichtnull}
w(k) \neq 0 \quad \mbox{for all} \ k>0.
\end{equation}
Indeed, if there was a number $k$ such that $w(k)=0$, then it would follow from relations \eqref{realw} and \eqref{real}, that $\varphi(x,k)=0$ for all $x$.


\section{A Trace formula and the perturbation determinant}

We consider the Hamiltonian 
\begin{equation}\label{hamiltonian}
H= -\frac{d^2}{dx^2} + V(x), \quad V= \overline{V},
\end{equation}
with boundary condition \eqref{bc2} in the space $L_2(\R_+)$. More precisely, $H$ is defined through the quadratic form
$$
\int_0^{\infty} \left( |u'(x)|^2 +V(x) |u(x)|^2 \right) \,dx + \gamma |u(0)|^2
$$
with form domain $H^1(\R_+)$.
By $H_0= -\frac{d^2}{dx^2}$ we denote the free Hamiltonian with the same boundary condition \eqref{bc2} but with $V\equiv 0$. The resolvents of $H$ and $H_0$ are denoted by $R(z)$ and $R_0(z)$, respectively.

In this section, we derive an expression for $\tr \left(R(z)-R_0(z)\right)$ in terms of the Jost function. From this relation we get a representation for the perturbation determinant.

It is a well-known fact, that $R(z)$ can be constructed in terms of solutions $\varphi(x,\zeta)$ and $\tet$ of equation \eqref{eq} and their Wronskian \eqref{jostfct}.
Suppose, that \eqref{cond} holds. Then for all $z$ such that $\im z \neq 0$ and $w(\zeta) \neq 0$, the resolvent is the integral operator with kernel
\begin{equation}\label{reso}
R(x,y;z) = w^{-1}(\zeta) \var \theta(y,\zeta),\quad x\leq y,\ \zeta = z^{1/2},
\end{equation}
and $R(x,y;z) = R(y,x;z)$. Moreover, the estimate
$$
|R(x,y;z)| \leq c |w(\zeta)|^{-1} |\zeta|^{-1} \exp(- \im \zeta |x-y|)
$$
holds. We note, that in the particular case $V\equiv0$, the unperturbed resolvent $R_0(z)$ has the integral kernel
\begin{equation}\label{freekernel}
R_0(x,y;z) = R_0(y,x;z) = \frac{ ((\gamma +i\zeta)e^{i\zeta x} -(\gamma-i\zeta) e^{-i\zeta x}) e^{i \zeta y}}{2i\zeta (\gamma -i \zeta)},\quad x\leq y.
\end{equation}

The self-adjoint operator $H$ has discrete negative spectrum, which consists of negative eigenvalues $\lambda_j  = (i \kappa_j)^2,\ \kappa_j >0$, which possibly accumulate at zero. It is important to note, that the zeros of the function $w(\zeta)$ and the eigenvalues of $H$ are related as follows.

\begin{lemma}\label{zero}
Complex zeros of the function $w(\zeta)$ are simple and lie on the imaginary axis. Moreover, $w(\zeta)=0$ if and only if $\lambda = \zeta^2$ is a negative eigenvalue of the operator $H$.
\end{lemma}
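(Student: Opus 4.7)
The plan is to establish the equivalence between zeros of $w$ and negative eigenvalues of $H$ first, and then use self-adjointness to pin the zeros to the imaginary axis, and finally derive a formula for $\dot w$ at a zero that is manifestly nonzero.

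For the equivalence, I would argue as follows. If $w(\zeta_0)=0$ with $\im\zeta_0>0$, then the Wronskian of $\varphi(\cdot,\zeta_0)$ and $\theta(\cdot,\zeta_0)$ vanishes, so these two solutions of \eqref{eq} are linearly dependent: $\varphi(x,\zeta_0)=c\,\theta(x,\zeta_0)$ for some constant $c$, with $c\neq 0$ because $\varphi(0,\zeta_0)=1$. Since $\im\zeta_0>0$, estimate \eqref{asympto} forces $\theta(\cdot,\zeta_0)\in L_2(\R_+)$, so $\varphi(\cdot,\zeta_0)\in L_2(\R_+)$ too and, by \eqref{bc2}, satisfies the Robin boundary condition. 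Thus $\varphi(\cdot,\zeta_0)$ lies in $\dom H$ and is an eigenfunction with eigenvalue $\zeta_0^2$. Conversely, if $\lambda=\zeta_0^2$ is a negative eigenvalue with eigenfunction $u\in L_2(\R_+)$, then $u(0)\neq 0$ (otherwise $u'(0)=\gamma u(0)=0$ and ODE uniqueness gives $u\equiv 0$), so after normalization $u(x)=\varphi(x,\zeta_0)$; since $u\in L_2$, the solution $\varphi(\cdot,\zeta_0)$ must be the (unique up to scalar) exponentially decaying solution, hence proportional to $\theta(\cdot,\zeta_0)$, and $w(\zeta_0)=0$.

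The location of the zeros is then immediate: $H$ is self-adjoint, so $\zeta_0^2\in\R$; writing $\zeta_0=a+ib$ with $b>0$ gives $\zeta_0^2=a^2-b^2+2iab$, and reality forces $a=0$, so $\zeta_0=i\kappa$ with $\kappa>0$, and $\zeta_0^2=-\kappa^2<0$ confirms that the eigenvalue is indeed negative.

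For simplicity, the key step is to derive a Wronskian-type identity for $\dot w$. Differentiating \eqref{eq} in $\zeta$ gives $-\dot\theta''+V\dot\theta=\zeta^2\dot\theta+2\zeta\theta$; combining this with the equation for $\varphi$ and forming the appropriate difference yields
\begin{equation*}
\partial_x\bigl(\varphi'\dot\theta-\dot\theta'\varphi\bigr)=2\zeta\,\varphi(x,\zeta)\,\theta(x,\zeta).
\end{equation*}
Evaluating at $x=0$ gives $\varphi'\dot\theta-\dot\theta'\varphi\bigr|_{x=0}=\gamma\dot\theta(0,\zeta)-\dot\theta'(0,\zeta)=\dot w(\zeta)$. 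For $\zeta=\zeta_0$ with $\im\zeta_0>0$, all of $\varphi$, $\dot\theta$, $\varphi'$, $\dot\theta'$ decay exponentially at infinity (using \eqref{asympto}, \eqref{eigeneabschatzung} and the fact that $\dot\theta\sim ix\,e^{i\zeta x}$), so the boundary term at $\infty$ vanishes and integration gives
\begin{equation*}
\dot w(\zeta_0)=-2\zeta_0\int_0^\infty \varphi(x,\zeta_0)\theta(x,\zeta_0)\,dx=-2\zeta_0\,c\int_0^\infty\theta(x,\zeta_0)^2\,dx.
\end{equation*}
Since $\varphi(\cdot,\zeta_0)=c\,\theta(\cdot,\zeta_0)$ is (a nonzero scalar multiple of) the real eigenfunction $u$, $\theta(\cdot,\zeta_0)$ equals a nonzero complex scalar times $u$, so $\int\theta^2=(\text{nonzero scalar})\cdot\|u\|_{L_2}^2\neq 0$; combined with $c\neq 0$ and $\zeta_0\neq 0$, this yields $\dot w(\zeta_0)\neq 0$, proving simplicity. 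The main obstacle I anticipate is keeping track of the decay at infinity for $\dot\theta$ and $\dot\theta'$ to justify vanishing of the boundary term there; a clean way to do this is to use the representation \eqref{varphi_tau} together with the known exponential decay of $\theta$ and explicit growth/decay properties of $\tau$, or alternatively a direct Volterra-type bound on $\dot\theta$.
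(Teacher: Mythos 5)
Your argument is correct, but for two of the three assertions it follows a genuinely different route from the paper. The forward direction (a zero of $w$ with $\im\zeta>0$ makes $\theta$, equivalently $\varphi=c\,\theta$, an $L_2$ Robin eigenfunction, and self-adjointness then forces $\zeta^2<0$, i.e.\ $\zeta=i\kappa$) is the same as in the paper. For the converse, the paper does not argue through ODE uniqueness: it notes that at an eigenvalue the resolvent has a pole, so the explicit kernel \eqref{reso}, $R(x,y;z)=w^{-1}(\zeta)\varphi(x,\zeta)\theta(y,\zeta)$, forces $w(\zeta)=0$; and simplicity is then obtained in one line from the fact that the resolvent of a self-adjoint operator has only simple poles. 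You instead use that any $L_2$ solution must be a multiple of the (unique up to scalars) decaying solution $\theta$, and you prove simplicity from the identity $(\varphi'\dot{\theta}-\varphi\dot{\theta}')'=2\zeta\varphi\theta$ --- which is exactly \eqref{useful}, used in the paper for the trace formula but not for this lemma --- integrating it over $(0,\infty)$ to get $\dot w(i\kappa)=-2i\kappa\,c\int_0^\infty\theta(x,i\kappa)^2\,dx\neq0$, the integral being positive since $\theta(\cdot,i\kappa)$ is real by \eqref{regeln}. Your route is the classical Jost-function argument: it is more self-contained (it does not invoke the resolvent representation \eqref{reso}, which the paper quotes without proof, nor the simple-pole property of self-adjoint resolvents) and it yields quantitative information ($\dot w$ at the zero is a norming-constant-type quantity), at the cost of two points you should make explicit: (i) that an eigenfunction of the form-defined operator $H$ is a classical solution satisfying $u'(0)=\gamma u(0)$, which justifies identifying it with $\varphi(\cdot,i\kappa)$; and (ii) the decay of $\dot{\theta}$ and $\dot{\theta}'$ at infinity needed to discard the boundary term, which you correctly flag as the main obstacle --- the cleanest fix is a Cauchy estimate on a disc $|\zeta-i\kappa|\le r<\kappa$ using the analyticity of $\theta(x,\cdot)$ together with \eqref{yafaev}, giving $|\dot{\theta}(x,i\kappa)|\le C_r\,e^{-(\kappa-r)x}$ (and similarly for $\dot{\theta}'$ via \eqref{schluss}), which suffices since $\varphi=c\,\theta$ decays like $e^{-\kappa x}$.
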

\begin{proof}
First, assume that $w(\zeta)=0$ for $\im \zeta > 0$. Then the Jost function $\tet$ fullfills boundary conditions \eqref{bc2} and is in the space $L_2(\R_+)$ because of \eqref{asympto}. Thus $\tet$ is an eigenfunction of the operator $H$ corresponding to the eigenvalue $\lambda = \zeta^2$. Since $H$ is self-adjoint, it follows that $\lambda < 0$. Conversely, assume that $\lambda$ is an eigenvalue of $H$. Then its resolvent $R(z)$ has a pole in $\lambda$. Therefore, it follows from \eqref{reso} that $w(\zeta)=0$. As the resolvent of a self-adjoint operator has only simple poles, the zeros of $w(\zeta)$ are simple.
\end{proof}

\begin{remark}
It follows from the proporties of the regular solution and Jost solution, that the resolvent kernel \eqref{reso} is an analytic function in the upper half-plane $\im \zeta > 0$, except for simple poles at eigenvalues of $H$. In view of \eqref{nichtnull}, the resolvent kernel is a continuous function of $z$ up to the cut along $[0, \infty)$  with the possible exception of the point $z=0$. 
\end{remark}

\begin{proposition}
Assume condition \eqref{cond}, then
\begin{equation}\label{Tr}
\Tr ( R_0(z) - R(z)) = \frac{1}{2\zeta}\left( \frac{\dot{w}(\zeta)}{w(\zeta)} + \frac{i}{\gamma - i\zeta} \right), \quad \zeta = z^{1/2},\ \im \zeta > 0.
\end{equation}
\end{proposition}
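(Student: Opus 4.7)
The plan is to compute the trace by integrating the diagonal of the kernel of $R(z) - R_0(z)$. Under the hypothesis $\int_0^\infty |V|\,dx < \infty$, a standard Birman--Krein argument (writing $R(z) - R_0(z) = -R(z)VR_0(z)$ and using that the free resolvent is Hilbert--Schmidt on the half-line) shows that $R(z) - R_0(z)$ is trace class, and its continuous kernel gives
$$
\Tr(R_0(z) - R(z)) = \int_0^\infty \bigl[R_0(x,x;z) - R(x,x;z)\bigr]\,dx,
$$
with $R(x,x;z) = w(\zeta)^{-1}\varphi(x,\zeta)\theta(x,\zeta)$ from \eqref{reso}, and analogously for $R_0$ with $\theta_0(x,\zeta)=e^{i\zeta x}$, $\varphi_0(x,\zeta) = \cos(\zeta x)+(\gamma/\zeta)\sin(\zeta x)$, and $w_0(\zeta) = \gamma-i\zeta$.

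The key ODE identity is obtained by differentiating $-\theta'' + V\theta = \zeta^2\theta$ in $\zeta$, multiplying by $\varphi$, and subtracting $\theta$ times the differential equation for $\varphi$; this yields
$$
\frac{d}{dx}\bigl[\varphi(x,\zeta)\dot\theta'(x,\zeta) - \varphi'(x,\zeta)\dot\theta(x,\zeta)\bigr] = -2\zeta\,\varphi(x,\zeta)\theta(x,\zeta),
$$
where the dot is $\partial_\zeta$. Integrating from $0$ to $N$ and using the $\zeta$-independent conditions $\varphi(0,\zeta)=1$, $\varphi'(0,\zeta)=\gamma$ (so that the boundary term at $0$ equals $\dot\theta'(0,\zeta)-\gamma\dot\theta(0,\zeta) = -\dot w(\zeta)$), I obtain
$$
\int_0^N R(x,x;z)\,dx = -\frac{1}{2\zeta w(\zeta)}\bigl[\varphi\dot\theta' - \varphi'\dot\theta\bigr]_{x=N} - \frac{\dot w(\zeta)}{2\zeta w(\zeta)},
$$
and the same formula for the free problem with $w$ replaced by $w_0$. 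Subtracting and taking $N\to\infty$, the combination $-\bigl(\dot w/w - \dot w_0/w_0\bigr)/(2\zeta) = -\bigl(\dot w/w + i/(\gamma-i\zeta)\bigr)/(2\zeta)$ appears directly (using $\dot w_0 = -i$), which is minus the right-hand side we seek.

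The main obstacle is therefore to prove that the difference of the boundary terms at $x=N$ vanishes in the limit:
$$
\lim_{N\to\infty}\left\{\frac{\bigl[\varphi\dot\theta' - \varphi'\dot\theta\bigr]_{x=N}}{w(\zeta)} - \frac{\bigl[\varphi_0\dot\theta_0' - \varphi_0'\dot\theta_0\bigr]_{x=N}}{w_0(\zeta)}\right\} = 0.
$$
To handle this I would use the decomposition \eqref{varphi_tau} of $\varphi$ as a linear combination of the Jost solution $\theta$ and the second solution $\tau(x,\zeta)\sim e^{-i\zeta x}$, noting in particular that the coefficient of $\tau$ equals $-w(\zeta)/(2i\zeta)$. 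Substituting this into the boundary bracket, the growing-in-$N$ parts produced by $\dot\theta \sim ix e^{i\zeta x}$ and $\dot\theta' \sim (i-\zeta x)e^{i\zeta x}$ appear with coefficient $w(\zeta)$, so after dividing by $w(\zeta)$ they match identically (and equal to leading order $-1/(2\zeta) - iN$) the free-case expression; the remaining contributions involve $\theta(x,\zeta)-e^{i\zeta x}$, $\theta'(x,\zeta)-i\zeta e^{i\zeta x}$, and their $\zeta$-derivatives, each of which tends to zero as $x\to\infty$ by the estimates \eqref{yafaev}, \eqref{schluss}, together with the analogous bounds for $\dot\theta$ and $\dot\theta'$ obtained by differentiating the Volterra integral equation for $\theta$ in $\zeta$ and invoking $\int_0^\infty|V|\,dx<\infty$. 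Establishing these $\zeta$-derivative estimates uniformly for $N\to\infty$ is the only nontrivial technical point; given them, the cancellation is immediate and the identity \eqref{Tr} follows.
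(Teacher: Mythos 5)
Your overall skeleton is the same as the paper's: trace as the limit of $\int_0^x$ of the diagonal kernel difference, the Wronskian-type identity \eqref{useful} (you state its negative), and the boundary term at $x=0$ producing $\dot w(\zeta)$ via the $\zeta$-independent conditions \eqref{bc2}. The divergence is in how the boundary term at $x=N$ is killed. The paper does this in two steps: for compactly supported $V$ the Jost solution equals $e^{i\zeta x}$ identically for large $x$, so $\dot\theta$, $\dot\theta'$ are computed exactly and the bracket is $(ix+(2\zeta)^{-1})w(\zeta)+o(1)$ with no error analysis needed; the general case \eqref{cond} is then obtained by a density/continuity argument (trace-norm continuity on the left, continuity of $w$ and $\dot w$ on the right), resting on $\sqrt{|V|}(H_0+\mathbbm{1})^{-1/2}$ being Hilbert--Schmidt. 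You instead try to prove the vanishing of the boundary-term difference directly for every $V\in L_1$, and this is where there is a genuine gap.

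Concretely: in the bracket $\varphi\dot\theta'-\varphi'\dot\theta$ the decomposition \eqref{varphi_tau} puts the error terms against the exponentially growing solution $\tau\sim e^{-i\zeta x}$ and against the linearly growing main parts of $\dot\theta,\dot\theta'$. After normalizing by $w(\zeta)$, the terms you must show vanish include quantities of the form $x\,e^{\im\zeta x}\,|\theta(x,\zeta)-e^{i\zeta x}|$, $x\,e^{\im\zeta x}\,|\theta'(x,\zeta)-i\zeta e^{i\zeta x}|$, $x\,|\tau(x,\zeta)e^{i\zeta x}-1|$, and $e^{\im\zeta x}$ times the errors in $\dot\theta\sim ixe^{i\zeta x}$, $\dot\theta'\sim(i-\zeta x)e^{i\zeta x}$ (the latter asymptotics themselves are only established in the paper for compactly supported $V$). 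The estimates \eqref{yafaev} and \eqref{schluss} you invoke give only $e^{\im\zeta x}|\theta-e^{i\zeta x}|\le C|\zeta|^{-1}\int_x^\infty|V(y)|\,dy$, so the factor of $x$ forces you to use $x\int_x^\infty|V(y)|\,dy\to0$, which does \emph{not} follow from \eqref{cond}: for $V(x)=(1+x)^{-2}$ one has $x\int_x^\infty|V|\to1$. Differentiating the Volterra equation in $\zeta$ produces bounds for $\dot b$, $\dot b'$ with the same defect. So "the remaining contributions tend to zero" is not justified by the cited estimates; making your route work would require either an extra decay hypothesis (e.g.\ the first-moment condition $\int(1+x)|V|\,dx<\infty$) or a finer analysis exploiting cancellations between the error of $\tau$ and that of $\theta$ (the dangerous $\int_x^\infty V$ contributions do cancel in $\tau\theta-1$, but this must be proved, not assumed). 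The cleanest fix is the paper's: prove \eqref{Tr} for compactly supported $V$, where your computation is exact, and then pass to general $V$ satisfying \eqref{cond} by approximation, using that both sides of \eqref{Tr} depend continuously on $V$ in the appropriate sense. A small additional slip: the free resolvent itself is not Hilbert--Schmidt on the half-line; the trace-class property of $R-R_0$ comes from factorizing the resolvent identity through $\sqrt{|V|}R_0$, which is Hilbert--Schmidt for $V\in L_1$.
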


\begin{proof}
Since $R-R_0$ is a trace class operator and kernels of the operators $R$ and $R_0$ are continuous functions, we have  
\begin{equation}
\Tr (R(z) - R_0(z)) = \lim_{x \to \infty} \int_{y=0}^{y=x} (R(y,y;z) - R_0(y,y;z)) \,dy.
\end{equation}
Using \eqref{freekernel}, we first compute
\begin{equation}\label{frei}
2 \zeta \int_{0}^{x} R_0(y,y;z) \,dy = \frac{1}{i(\gamma-i\zeta)} \left( \frac{\gamma + i \zeta}{2i\zeta} e^{2i\zeta x} -(\gamma -i\zeta)x-\frac{\gamma+i\zeta}{2i\zeta} \right).
\end{equation}
The following equation is true for any two arbitrary solutions of equation \eqref{eq}
\begin{equation}\label{useful}
2 \zeta \var \tet = (\varphi'(x,\zeta) \dot{\theta}(x,\zeta) - \var \dot{\theta}'(x,\zeta))'.
\end{equation}
Applying \eqref{useful}  to the regular solution $\varphi(x,\zeta)$ and the Jost solution $\tet$, we get
\begin{eqnarray}\label{echte}
2 \zeta w(\zeta)  \int_{0}^{x} R(y,y;z) \,dy = 2 \zeta  \int_{0}^{x} \varphi(y,\zeta) \theta(y,\zeta) \,dy = \left[ \varphi'(y,\zeta) \dot{\theta}(y,\zeta) - \varphi(y,\zeta) \dot{\theta}'(y,\zeta)\right]_0^x.
\end{eqnarray} 
Note, that the contribution of the right-hand side in \eqref{echte} for $y=0$ is
\begin{equation}\label{0}
\varphi'(0,\zeta) \dot{\theta}(0,\zeta) - \varphi(0,\zeta) \dot{\theta}'(0,\zeta) = \gamma \dot{\theta}(0,\zeta) -  \dot{\theta}'(0,\zeta) = \dot{w}(\zeta).
\end{equation}
Consider now the case of potentials of compact support. Then, for all $\zeta \in \C$ , we have $\tet= e^{i\zeta x}$ for sufficiently large $x$. Further, in this case we can generalize \eqref{real} to complex $\zeta$, namely
\begin{equation}\label{complex}
\varphi(x,\zeta) = \frac{1}{2i\zeta}\left( \theta(x,\zeta) w(-\zeta) - \theta(x,-\zeta) w(\zeta) \right).
\end{equation}
For large $x$ we have,
$$
\tet = e^{ix\zeta},\quad \theta'(x,\zeta)= i\zeta e^{ix\zeta},\quad \dot{\theta}(x,\zeta) = ix e^{ix\zeta}, \quad \dot{\theta}'(x,\zeta)=(i-x\zeta)e^{ix\zeta}.
$$
Taking into account that, since $\im \zeta >0$, the terms containing $e^{2ix\zeta}$ tend to zero as $x \to \infty$ and using \eqref{complex}, we get for sufficiently large $x$
\begin{equation}\label{x}
\varphi'(x,\zeta) \dot{\theta}(x,\zeta) - \varphi(x,\zeta) \dot{\theta}'(x,\zeta) = (ix + (2\zeta)^{-1})w(\zeta) + o(1).
\end{equation}
Combining \eqref{0} and \eqref{x}, we arrive at
\begin{equation}\label{nichtfrei}
2 \zeta \int_{0}^{x} R(y,y;z) \,dy = ix + \frac{1}{2\zeta} -  \frac{\dot{w}(\zeta)}{w(\zeta)} + o(1).
\end{equation}
Finally, we conclude from \eqref{frei} and \eqref{nichtfrei}, that
$$
\lim_{x \to \infty} \int_{0}^{x} (R_0(y,y;z) - R(y,y;z)) \,dy = \frac{1}{2 \zeta} \left( \frac{\dot{w}(\zeta)}{w(\zeta)} + \frac{i}{\gamma -i \zeta}\right) +o(1).
$$
This proves \eqref{Tr} for compactly supported potentials $V$. By density arguments (see \cite[Prop. 4.5.3]{Y}), based on the fact that $\sqrt{|V|} (H_0+\mathbbm{1})^{-1/2}$ is a Hilbert-Schmidt operator under condition \eqref{cond},  the result can be extended to all potentials $V$ satisfying this condition.
\end{proof}

We conclude this section by relating the Jost function to the perturbation determinant. Since $\sqrt{|V|} (H_0+\mathbbm{1})^{-1/2}$ is a Hilbert-Schmidt operator, the operator $\sqrt{V} R_0(z) \sqrt{|V|}$ is a trace class operator (here $\sqrt{V}:= (\sgn V) \sqrt{|V|}$) and therefore the (modified) perturbation determinant
$$
D(z) :=  \mbox{det} (\mathbbm{1} +\sqrt{V} R_0(z) \sqrt{|V|}),\quad  z\in \rho(H_0)
$$
is well-defined. Here $\rho(H_0)$ denotes the resolvent set of the operator $H_0$. Furthermore, it can easily be verified that  the perturbation determinant is related to the trace of the resolvent difference by
$$
\frac{D'(z)}{D(z)} = \Tr(R_0(z) - R(z)), \quad z\in \rho(H_0) \cap \rho(H) .
$$
Thus, it follows from \eqref{Tr} that
$$
\frac{D'(z)}{D(z)} =  \frac{1}{2\zeta} \frac{d}{d \zeta} \left(\ln w(\zeta) - \ln (\gamma-i \zeta) \right) = \frac{1}{2\zeta} \frac{d}{d \zeta}\ln \left(\frac{w(\zeta)}{\gamma-i \zeta}\right) = \frac{ \left(\frac{w(\sqrt{z})}{\gamma -i \sqrt{z}}\right)'}{ \left(\frac{w(\sqrt{z})}{\gamma -i \sqrt{z}}\right)}.
$$
Therefore, we conclude that $D(z)= C w(\sqrt{z})/( \gamma - i \sqrt{z})$. Because of the asymptotics \eqref{ruscho}, it follows that
$$
D(z)= \frac{w(\sqrt{z})}{\gamma - i \sqrt{z}} \ .
$$
This is the sought after relation.


\section{Low and High-energy asymptotics}

Here we derive an asymptotic expansion of the perturbation determinant  $D(\zeta)$ as $|\zeta| \to \infty$.

\subsection{High-energy asymptotics}

In this subsection, we assume, that $V\in C^{\infty}(\R_+)$ and that 
\begin{equation}\label{glatt}
|V^{(j)}(x)| \leq C_j (1+x)^{-\rho-j}, \quad \rho \in (1,2],\quad j\in \N_0.
\end{equation}
The asymptotic expansion of the Jost solution $\tet$ for $ | \zeta | \to  \infty$ can be found, e.g., in \cite{Y}. Thereby, it is more convenient to consider the function $b(x,\zeta)$ defined by

\begin{equation}\label{b}
b(x,\zeta) := e^{-ix\zeta} \tet.
\end{equation}
Note, that equation \eqref{eq} for $\tet$ is equivalent to the equation
\begin{equation}\label{eqb}
-b''(x,\zeta) -2i\zeta b'(x,\zeta) + V(x) b(x,\zeta)= 0.
\end{equation}
It follows from \eqref{b} that asymptotics \eqref{asympto} and the asymptotics
\begin{equation}\label{asymptob}
b(x,\zeta) = 1 + o(1), \quad b'(x,\zeta) = o(1), \quad x\to \infty,
\end{equation}
are equivalent to each other. For an arbitrary $N$, the equality
\begin{equation}\label{expansion_b}
b(x,\zeta) = \sum_{n=0}^N b_n(x) (2i\zeta)^{-n} + r_N(x,\zeta)
\end{equation}
holds with the remainder satisfying the estimates
$$
| \partial^j r_N(x,\zeta)/ \partial x^j | \leq C_{N,j} |\zeta|^{-N-1} (1+|x|)^{-(N+1)(\rho -1) -j}, \quad j \in \N_0,
$$
for all $x \geq 0$ and $ \im \zeta \geq 0, \ |\zeta| \geq c >0$. Here $b_0(x)=1$ and $b_n(x)$ are real $C^{\infty}$ functions defined by the recurrent relation 
\begin{equation}\nonumber
b_{n+1} (x) = -b'_n(x) - \int_x^\infty V(y) b_n(y) \,dy.
\end{equation}
Further the following estimates hold,
$$
b_n^{(j)} (x) = O(x^{-n(\rho-1)-j}),\quad j \in \N_0, \quad x \to \infty.
$$
Now, we can prove the asymptotic expansion of the perturbation determinant for $|\zeta| \to \infty$.

\begin{lemma}\label{D}
Suppose $V\in C^{\infty}(\R_+)$ and \eqref{glatt}. Then the perturbation determinant admits the expansion in the asymptotic series
\begin{equation}\label{expansion_pertdet}
D( \zeta) = \sum_{n=0}^\infty d_n (2i\zeta)^{-n}, 
\end{equation}
as $| \zeta | \to \infty, \ \im \zeta \geq 0$. The coefficients $d_n$ are given by
\begin{equation}\label{dn}
d_0 = 1, \quad d_n := b_n(0) + 2 \sum_{m=1}^{n-1} b_m'(0) (2 \gamma)^{n-m-1},\ \ n \geq 1.
\end{equation}
\end{lemma}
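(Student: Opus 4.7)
The plan is to reduce the expansion of $D(\zeta)$ to the known expansion \eqref{expansion_b} of $b(x,\zeta)$ evaluated at $x=0$, together with a geometric series expansion of $(\gamma - i\zeta)^{-1}$.

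First I would rewrite $w(\zeta)$ in terms of $b$. From $\theta(x,\zeta) = e^{i\zeta x} b(x,\zeta)$ one has
\begin{equation*}
\theta(0,\zeta) = b(0,\zeta), \qquad \theta'(0,\zeta) = i\zeta\, b(0,\zeta) + b'(0,\zeta),
\end{equation*}
so that $w(\zeta) = \gamma \theta(0,\zeta) - \theta'(0,\zeta) = (\gamma - i\zeta)\, b(0,\zeta) - b'(0,\zeta)$. Dividing by $\gamma - i\zeta$ and using the identity $D(\zeta) = w(\zeta)/(\gamma - i\zeta)$ established at the end of Section~3 yields the key reformulation
\begin{equation*}
D(\zeta) = b(0,\zeta) - \frac{b'(0,\zeta)}{\gamma - i\zeta}.
\end{equation*}

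Next, I would substitute the expansion \eqref{expansion_b} at $x=0$ into both $b(0,\zeta)$ and, differentiated in $x$, into $b'(0,\zeta)$, and use that $b_0 \equiv 1$ (so $b_0(0)=1$ and $b_0'(0)=0$). For the prefactor I would write out the geometric series
\begin{equation*}
\frac{1}{\gamma - i\zeta} = -\frac{1}{i\zeta}\sum_{k=0}^{\infty}\left(\frac{\gamma}{i\zeta}\right)^{k} = -\sum_{k=0}^{\infty}\frac{2\,(2\gamma)^{k}}{(2i\zeta)^{k+1}},
\end{equation*}
valid for $|\zeta|>|\gamma|$, $\im\zeta \geq 0$. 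Multiplying the series for $b'(0,\zeta)$ (which starts at $m=1$ since $b_0'(0)=0$) by this series and collecting powers of $(2i\zeta)^{-1}$, the coefficient of $(2i\zeta)^{-n}$ in $-b'(0,\zeta)/(\gamma-i\zeta)$ comes from indices $m+k+1 = n$ with $m\geq 1$, giving precisely $2\sum_{m=1}^{n-1} b_m'(0)(2\gamma)^{n-m-1}$. Adding the contribution $b_n(0)$ from $b(0,\zeta)$ then produces the formula \eqref{dn} for $d_n$, with $d_0=b_0(0)=1$.

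Finally I would treat the remainder. The bound on $r_N$ stated after \eqref{expansion_b}, applied at $x=0$ together with the corresponding bound on $\partial_x r_N(0,\zeta)$, gives $|r_N(0,\zeta)| + |\partial_x r_N(0,\zeta)| \leq C_N |\zeta|^{-N-1}$ uniformly for $\im\zeta \geq 0$, $|\zeta| \geq c$. Since $|\gamma - i\zeta|^{-1} \leq C|\zeta|^{-1}$ for $|\zeta|$ large, the remainder contribution from $\partial_x r_N(0,\zeta)/(\gamma-i\zeta)$ is $O(|\zeta|^{-N-2})$, and the tail of the geometric series multiplied by the polynomial part of $b'(0,\zeta)$ is likewise absorbed into $O(|\zeta|^{-N-1})$. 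Therefore the partial sum $\sum_{n=0}^{N} d_n (2i\zeta)^{-n}$ differs from $D(\zeta)$ by $O(|\zeta|^{-N-1})$, which is precisely the meaning of the asymptotic series \eqref{expansion_pertdet}.

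The only mildly subtle step is the bookkeeping in matching the Cauchy product with the claimed closed form for $d_n$; the analytic content (existence of the $b_n$, the remainder estimate, the relation $D(\zeta)=w(\zeta)/(\gamma-i\zeta)$) is already in hand, so no further hard estimates are needed.
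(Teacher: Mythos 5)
Your proposal is correct and follows essentially the same route as the paper: both rewrite $D(\zeta)=w(\zeta)/(\gamma-i\zeta)$ as $b(0,\zeta)+ b'(0,\zeta)/(i\zeta-\gamma)$ via $\theta(x,\zeta)=e^{i\zeta x}b(x,\zeta)$, expand $b(0,\zeta)$ and $b'(0,\zeta)$ by \eqref{expansion_b}, and apply the geometric series in $(2\gamma)(2i\zeta)^{-1}$ to collect the coefficients \eqref{dn}. Your extra bookkeeping of the remainder $r_N$ and $\partial_x r_N$ at $x=0$ only makes explicit what the paper leaves implicit in calling \eqref{expansion_pertdet} an asymptotic series.
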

We emphasize that expansion \eqref{expansion_pertdet} is understood in the sense of an asymptotic series.
\begin{proof}
It follows from \eqref{expansion_b} that
\begin{equation}\label{anwenden}
\frac{b'(x,\zeta) + (i \zeta - \gamma) b(x,\zeta)}{i \zeta - \gamma} = \frac{1}{i \zeta - \gamma} \sum_{n=0}^\infty b_n'(x) (2 i \zeta)^{-n} + \sum_{n=0}^\infty b_n(x) (2i \zeta)^{-n}.
\end{equation}
Applying the geometric series to the first sum in the right-hand side of \eqref{anwenden} for $|\zeta| > \gamma$, we conclude 
\begin{equation}\label{anwenden2}
\frac{b'(x,\zeta) + (i \zeta - \gamma) b(x,\zeta)}{i \zeta - \gamma} = b_0(x) + \sum_{n=1}^\infty \left( b_n(x) + 2 \sum_{m=0}^{n-1} b_m'(x) (2 \gamma)^{n-m-1}  \right) (2i \zeta)^{-n}.
\end{equation}
On the other hand, it is easy to see that 
\begin{equation}\label{anwenden3}
b'(x,\zeta) + (i \zeta - \gamma) b(x,\zeta) = e^{-i \zeta x} ( \theta'(x,\zeta) - \gamma \tet ) 
\end{equation}
Thus, setting $x=0$ and combining \eqref{anwenden2} with \eqref{anwenden3}, we arrive at \eqref{expansion_pertdet}.
\end{proof}

Note, that because of \eqref{ruscho}, we have for $| \zeta| \to  \infty, \ \ \im \zeta \geq 0$,\\
\begin{equation}\label{1}
D(\zeta) = \frac{w(\zeta)}{\gamma - i\zeta} = 1+ O(| \zeta |^{-1}).
\end{equation}
Thus, we can fix the branch of the the function $\ln D$ by the condition $\ln D(\zeta) \to 0$ as $|\zeta| \to \infty$. The following Corollary is an immediate consequence  of Lemma \ref{D}.
\begin{corollary}\label{lnD}
Suppose $V\in C^{\infty}(\R_+)$ and \eqref{glatt}. Then for $| \zeta | \to \infty, \ \im \zeta \geq 0$, we have
$$
 \ln D(\zeta) = \sum_{n=1}^\infty \ell_n (2i\zeta)^{-n},
 $$
where the coefficients $\ell_n$ are given by
\begin{equation}\label{ln}
\ell_1:= d_1, \quad \ell_n:= d_n - n^{-1} \sum_{j=1}^{n-1} j d_{n-j} \ell_j , \ \ n \geq 2.
\end{equation}
\end{corollary}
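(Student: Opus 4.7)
The plan is to obtain the expansion of $\ln D(\zeta)$ by matching asymptotic series. By the asymptotics \eqref{1}, $D(\zeta) \to 1$ as $|\zeta| \to \infty$ with $\im \zeta \geq 0$, so the branch of the logarithm fixed by $\ln D(\zeta) \to 0$ is well-defined in that regime. Since Lemma \ref{D} expresses $D(\zeta)$ as a formal power series in the variable $t := (2i\zeta)^{-1}$ with constant term $d_0 = 1$, the composition with $\log(1+\cdot)$ gives a formal power series $\ln D(\zeta) = \sum_{n \geq 1} \ell_n t^n$ without constant term, and this determines the $\ell_n$ uniquely order by order.

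To read off the recurrence, I would differentiate the identity $D = \exp(\ln D)$ with respect to $t$, which yields $D' = (\ln D)' \cdot D$. Writing both factors as power series in $t$, this becomes
\begin{equation*}
\sum_{n \geq 1} n d_n t^{n-1} = \left( \sum_{m \geq 1} m \ell_m t^{m-1} \right) \left( \sum_{k \geq 0} d_k t^k \right).
\end{equation*}
Comparing the coefficients of $t^{n-1}$ and isolating the term $m = n$ (using $d_0 = 1$) gives
\begin{equation*}
n d_n = n \ell_n + \sum_{m=1}^{n-1} m \ell_m d_{n-m},
\end{equation*}
which rearranges to \eqref{ln}. The $n=1$ case gives $\ell_1 = d_1$, and for $n \geq 2$ we recover exactly the claimed recurrence (after relabelling $m$ as $j$ and using $d_{n-j}$).

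The only subtle point is that \eqref{expansion_pertdet} is an asymptotic, not convergent, series, so the manipulation must be interpreted accordingly. This is handled by truncating: for any fixed $N$, one writes $D(\zeta) = P_N(t) + O(|\zeta|^{-N-1})$ with $P_N$ a polynomial, observes that $\ln(P_N(t)) = \sum_{n=1}^{N} \ell_n t^n + O(t^{N+1})$ with $\ell_n$ determined by the finite algebraic identity above, and concludes that $\ln D(\zeta) - \sum_{n=1}^N \ell_n (2i\zeta)^{-n} = O(|\zeta|^{-N-1})$. Since the recurrence for $\ell_n$ involves only $d_1, \ldots, d_n$ and $\ell_1, \ldots, \ell_{n-1}$, no issue of convergence ever arises, and the coefficients are the same for all sufficiently large $N$. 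I expect this bookkeeping, rather than any analytic difficulty, to be the only thing requiring care.
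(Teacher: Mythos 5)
Your proposal is correct and is precisely the argument the paper leaves implicit: the Corollary is stated as an immediate consequence of Lemma \ref{D}, with the recursion \eqref{ln} being exactly the standard relation obtained by comparing coefficients in $D' = (\ln D)'\,D$ for a series with $d_0 = 1$, and your truncation remark correctly handles the fact that \eqref{expansion_pertdet} is only an asymptotic series.
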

The first coefficients $\ell_n$ work out to be

\begin{eqnarray}\nonumber
\ell_1= - \int_0^\infty V(x) \,dx , \quad \ell_2 = V(0),\quad \ell_3 = 4 \gamma V(0) -V'(0) +  \int_0^{\infty}V^2(x) \,dx,
\end{eqnarray}
\begin{eqnarray}\nonumber
\ell_4 = V''(0) - 2 V^2(0) -4 \gamma V'(0) + 8 \gamma^2 V(0).
\end{eqnarray}
From \eqref{nurw} it follows for $k \in \R$, that
$$
\ln D(k)= \ln \left( \frac{w(k)}{\gamma - ik }\right) = \ln a(k) + i\eta(k).
$$
Seperating in Corollary \ref{lnD} the function $\ln D(k)$ into its real and imaginary part, we finally conclude that for $k \to \infty$,
\begin{eqnarray}\label{a}
\ln a(k) &=& \sum_{n=1}^\infty (-1)^n \ell_{2n} (2k)^{-2n},\\ \nonumber 
\eta(k) &=& \sum_{n=0}^\infty (-1)^{n+1} \ell_{2n+1} (2k)^{-2n-1}.
\end{eqnarray}

\subsection{Low-energy asymptotics}
In this section we assume, that
\begin{equation} \label{assu}
\int_0^{\infty} (1+x)|V(x)| \,dx < \infty.
\end{equation} 
We denote the regular solution for $\zeta=0$ by $\varphi(x)$. This is the solution of the integral equation \eqref{ieq},
\begin{equation} \label{varphi(x)}
\varphi(x) = 1 + x\gamma + \int_0^x (x-y) V(y) \varphi(y) \,dy.
\end{equation}
This solution exists under condition \eqref{cond}. As shown in  \cite{Y}, the stronger condition \eqref{assu} guarantees the existence of a Jost solution $\theta(x,\zeta)$ at $\zeta=0$. For any fixed $x \geq 0$, the Jost solution $\theta(x,\zeta)$ is continuous as $\zeta \to 0,\ \im \zeta \geq 0$. Moreover,
$$
| \theta(x,\zeta) - e^{i \zeta x} | \leq e^{- \im \zeta x} \left( \exp\left(C \int_x^{\infty} y |V(y)| \,dy \right)-1 \right),
$$
where $C$ does not depend on $\zeta$ and $x$. The function $\theta(x):= \theta(x,0) = \overline{\theta(x,0)}$ satisfies the equation
\begin{equation}\label{homogen}
-u'' + V(x) u = 0
\end{equation}
and, as $x \to \infty$,
\begin{equation}\label{neueasympto}
\theta(x) = 1+O\left( \int_x^{\infty} y |V(y)| \,dy \right) = 1+o(1), \quad \theta'(x) = O\left(  \int_x^{\infty}  |V(y)| \,dy \right) = o(x^{-1}).
\end{equation}
Indeed, asymptotics \eqref{neueasympto} follow from the integral equation \eqref{inteq} for $\zeta=0$, namely,
$$
\theta(x) = 1 + \int_x^{\infty} (y-x) V(y) \theta(y) \,dy.
$$
One can also show that the Jost function $w(\zeta)$ is continuous as $\zeta \to 0,\ \im \zeta \geq 0$, and from  \eqref{represantation_w} we get for $\zeta = 0$
\begin{equation}\label{representation_w0}
w(0) = \gamma + \int_0^{\infty} V(y) \varphi(y) \,dy.
\end{equation}
If \eqref{assu} holds, then the integral in \eqref{representation_w0} is convergent, in view of the estimate 
$$
| \varphi(x,0) - 1| \leq \tilde{c} \gamma x,
$$
following from \eqref{unique}. Moreover, we have $w(0) = \overline{w(0)}$.

After these preliminaries we claim that the operator $H$ has no zero eigenvalue. Indeed, the function defined by
$$
\tau(x)= \theta(x) \int_{x_0}^x \theta(y)^{-2} \,dy, \quad x \geq x_0,
$$
is a solution of equation \eqref{homogen} and is linearly independent of $\theta(x)$. Again, $x_0$ is an arbitrary point such that $\theta(x) \neq 0$ for $x \geq x_0$. Further,
$$
\tau(x) = x + o(x), \quad \tau'(x) = 1+ o(1) \quad \mbox{as} \ x \to \infty \ \mbox{and} \ W\{\theta,\tau\} = -1.
$$
Thus, the equation \eqref{homogen} does not have solutions, tending to zero at infinity, as claimed.

While the operator does not have a zero eigenvalue, it may have a so-called zero resonance.

\begin{definition}
Under assumption \eqref{assu}, one says that the operator $H$ has a resonance at $\zeta=0$ if $w(0)=0$.
\end{definition}

Since the Jost function is the Wronskian of the Jost and the regular solution, the resonance condition means that $\varphi$ is a multiple of $\theta$ and therefore that equation \eqref{homogen} has a bounded solution satisfying boundary condition \eqref{bc2}.

Now, we want to analyze the behavior of the Jost function $w(\zeta)$ as $\zeta \to 0$. More precisely, we want to show that if $w(0)=0$, then it vanishes not faster than linearly. In order to prove this, we need the following technical Lemma.

\begin{lemma} \label{diff}
Assume \eqref{assu} and let $w(0)=0$. Then 
\begin{equation}\label{varphi-varphi(x)}
|\varphi(x,\zeta) - \varphi(x)| \leq C |\zeta| x e^{\im \zeta x}, \quad \im \zeta \geq 0.
\end{equation}
\end{lemma}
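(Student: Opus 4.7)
The plan is to derive a Volterra integral equation for $\psi(x) := \varphi(x,\zeta) - \varphi(x)$, then close the loop with a Gronwall-type argument. Since the boundary conditions \eqref{bc2} do not depend on $\zeta$, one has $\psi(0) = \psi'(0) = 0$, so the difference grows only due to the spectral parameter $\zeta^2$ on the right-hand side of \eqref{eq}, and the task is to quantify this growth.

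The crucial consequence of the hypothesis $w(0)=0$ is that $\varphi$ is uniformly bounded on $[0,\infty)$. Indeed, evaluating the Wronskian identity \eqref{jostfct} at $\zeta=0$ gives $w(0) = \varphi'(x)\theta(x) - \theta'(x)\varphi(x)$, so its vanishing forces $\varphi(x) = c\,\theta(x)$ for some constant $c$; boundedness then follows from $\theta(x) = 1+o(1)$ in \eqref{neueasympto}. I set $M := \sup_{x\geq 0}|\varphi(x)|$.

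Subtracting the integral equation \eqref{ieq} (at $\zeta$) from \eqref{varphi(x)} and regrouping the terms containing $\varphi(y,\zeta)-\varphi(y)$ under a single integral yields
\[
\psi(x) = A(x) + \int_0^x \zeta^{-1}\sin(\zeta(x-y))\,V(y)\,\psi(y)\,dy,
\]
with inhomogeneity
\[
A(x) = [\cos(\zeta x) - 1] + \gamma\bigl[\zeta^{-1}\sin(\zeta x) - x\bigr] + \int_0^x \bigl[\zeta^{-1}\sin(\zeta(x-y)) - (x-y)\bigr]\,V(y)\,\varphi(y)\,dy.
\]
Using $|\sin(\zeta s)| \leq e^{\im\zeta s}$ and the identity $\cos(\zeta x)-1 = -\zeta\int_0^x \sin(\zeta s)\,ds$, together with analogous integral representations for the other two terms, the boundedness of $\varphi$ and the first-moment assumption \eqref{assu}, I aim at the bound $|A(x)| \leq C|\zeta|\,x\,e^{\im\zeta x}$. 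Since moreover $|\zeta^{-1}\sin(\zeta(x-y))| \leq (x-y)\,e^{\im\zeta(x-y)}$, setting $\tilde\psi(x) := e^{-\im\zeta x}|\psi(x)|$ reduces the Volterra estimate to
\[
\tilde\psi(x) \leq C|\zeta|\,x + \int_0^x (x-y)\,|V(y)|\,\tilde\psi(y)\,dy \leq C|\zeta|\,x + x\int_0^x |V(y)|\,\tilde\psi(y)\,dy.
\]
Dividing by $x$ and writing $\nu(y) := \tilde\psi(y)/y$, this becomes $\nu(x) \leq C|\zeta| + \int_0^x y\,|V(y)|\,\nu(y)\,dy$, and Gronwall's inequality combined with \eqref{assu} yields $\nu(x) \leq C|\zeta|\,\exp\bigl(\int_0^\infty y|V(y)|\,dy\bigr) = C'|\zeta|$, i.e.\ $|\psi(x)| \leq C'|\zeta|\,x\,e^{\im\zeta x}$.

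The hard part is the bound on $A(x)$: the term $\gamma[\zeta^{-1}\sin(\zeta x) - x]$ and the analogous integral naively give only $O(|\zeta|^2 x^3)$ via Taylor expansion of $\sin$. To recover the cleaner $O(|\zeta|\,x\,e^{\im\zeta x})$ behavior uniformly in $x\geq 0$, one has to exploit the decay of $V$ encoded in \eqref{assu} together with the boundedness of $\varphi$ (itself a direct consequence of $w(0)=0$), so that the extra factors of $x-y$ are absorbed into the convergent moment $\int_0^\infty y\,|V(y)|\,dy$ rather than accumulating in the prefactor.
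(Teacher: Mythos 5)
There is a genuine gap, and it is exactly at the point you yourself flag as ``the hard part''. Your decomposition of $\psi=\varphi(\cdot,\zeta)-\varphi$ into the Volterra equation with inhomogeneity
\[
A(x)=\bigl[\cos(\zeta x)-1\bigr]+\gamma\bigl[\zeta^{-1}\sin(\zeta x)-x\bigr]+\int_0^x\bigl[\zeta^{-1}\sin(\zeta(x-y))-(x-y)\bigr]V(y)\varphi(y)\,dy
\]
is the same as the paper's $\Omega_0$, and your Gronwall closing of the loop is fine. But the claimed bound $|A(x)|\leq C|\zeta|\,x\,e^{\im\zeta x}$ cannot be obtained term by term from boundedness of $\varphi$ and the moment condition \eqref{assu}: the middle term $\gamma\bigl[\zeta^{-1}\sin(\zeta x)-x\bigr]$ contains no $V$ at all, so there is no moment to absorb anything into, and for real $\zeta=k\to0$ with $x\sim 1/k$ it is of size $|\gamma|\,|\sin 1-1|/k$, whereas the target bound is $Ckx=O(1)$. (Taylor expansion gives only $O(|\zeta|^2x^3)$, i.e.\ $O(|\zeta|x\cdot|\zeta|x^2)$, which is useless for $|\zeta|x\gtrsim1$.) The same failure occurs for the kernel integral taken on its own. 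So the only substantive use you make of the hypothesis $w(0)=0$ --- that $\varphi$ is proportional to $\theta$ and hence bounded --- is not enough to carry out the estimate as sketched.

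What is missing is the cancellation that the paper extracts from $w(0)=0$ in its integral form: by \eqref{representation_w0}, $w(0)=0$ is equivalent to $\gamma=-\int_0^\infty V(y)\varphi(y)\,dy$. Substituting this for $\gamma$ in the middle term and splitting the integral at $y=x$ combines it with your third term and yields
\[
A(x)=\bigl[\cos(\zeta x)-1\bigr]-\bigl(\zeta^{-1}\sin(\zeta x)-x\bigr)\int_x^\infty V\varphi\,dy+\int_0^x K(x,y,\zeta)V(y)\varphi(y)\,dy,
\quad K=-\zeta^{-1}\sin(\zeta x)+y+\zeta^{-1}\sin(\zeta(x-y)),
\]
in which the linearly growing pieces have cancelled: $K=\int_{x-y}^{x}\bigl(1-\cos(\zeta t)\bigr)dt$ satisfies $|K|\leq C|\zeta|\,x\,y\,e^{\im\zeta x}$, so the last integral is controlled by $|\zeta|\,x\,e^{\im\zeta x}\int_0^\infty y|V(y)|\,dy$ (here boundedness of $\varphi$ and \eqref{assu} do their work), while the tail term is bounded using $|\zeta^{-1}\sin(\zeta x)-x|\leq C|\zeta|x^2e^{\im\zeta x}$ together with $\int_x^\infty|V\varphi|\,dy\leq Cx^{-1}\int_x^\infty y|V(y)|\,dy$. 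With this repair --- which is precisely the paper's argument --- your outline goes through; without it, the bound on $A(x)$ you assert is simply false for $\gamma\neq0$.
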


\begin{proof}
We set $\Omega(x,\zeta) = \varphi(x,\zeta) -\varphi(x)$ and 
\begin{equation}\label{Omega_0}
\Omega_0(x,\zeta) = \cos(\zeta x) + \frac{\gamma}{\zeta} \sin(\zeta x) -1- x \gamma -\int_0^x (x-y) V(y) \varphi(y) \,dy + \int_0^x \zeta^{-1} \sin(\zeta (x-y)) V(y) \varphi(y) \,dy.
\end{equation}
It follows from \eqref{ieq} and \eqref{varphi(x)}, that
$$
\Omega(x,\zeta) = \Omega_0(x,\zeta) + \int_0^x \zeta^{-1} \sin(\zeta(x-y)) V(y) \Omega(y,\zeta) \,dy.
$$
We first prove, that
\begin{equation} \label{ersteabschatzung}
|\Omega_0(x,\zeta)| \leq C |\zeta| x e^{\im \zeta x}.
\end{equation}
Note, that the condition $w(0)=0$ is equivalent to the condition $ \int_0^{\infty} V(y) \varphi(y) \,dy  = -\gamma$. Therefore, we can rewrite \eqref{Omega_0} as 
\begin{equation}\label{Omega0}
\Omega_0(x,\zeta)= \cos(\zeta x) - 1 -(\zeta^{-1} \sin(\zeta x) -x ) \int_x^{\infty} V(y) \varphi(y) \,dy + \int_0^x K(x,y,\zeta) V(y) \varphi(y) \,dy,
\end{equation}
where
$$
K(x,y,\zeta) = -\zeta^{-1} \sin(\zeta x) +y+\zeta^{-1} \sin(\zeta(x-y)).
$$
The third and fourth term in the right-hand side of \eqref{Omega0} are bounded from above by $C |\zeta| x e^{\im \zeta x}$. This follows in the same way as shown in \cite{Y}, which only uses that $\varphi(x)$ is bounded. It remains to give an estimate for the first and second term in \eqref{Omega0}, which we write as
$$
\cos (\zeta x) -1 = -2 \sin^2(\zeta x /2).
$$
Using the estimates
$$
| \sin( \zeta x/2)| \leq ce^{| \im \zeta | x/2} \quad \mbox{and} \quad | \sin( \zeta x/2)| \leq c |\zeta | x e^{| \im \zeta | x/2},
$$
we get
$$
| \cos (\zeta x) -1 | \leq c  |\zeta | x e^{| \im \zeta | x}.
$$
Thus, we conclude \eqref{ersteabschatzung}. This inequality implies \eqref{varphi-varphi(x)} by Gronwall's Lemma exactly as in \cite[Lemma 4.3.6]{Y}.
\end{proof}

\begin{proposition}
Under the assumption \eqref{assu} and $w(0)=0$ we have the following asymptotics for the Jost function,
\begin{equation}\label{asymptojostfct}
w(\zeta) = -i w_0 \zeta + o(\zeta), \quad \zeta \to 0,
\end{equation}
where $w_0 = 1-  \int_0^{\infty} yV(y) \varphi(y) \,dy\neq 0$.
\end{proposition}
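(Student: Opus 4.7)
The plan is to extract the leading order of $w(\zeta)$ directly from the integral representation of Lemma \ref{lemma_w}. Starting from \eqref{represantation_w} together with its $\zeta=0$ specialization \eqref{representation_w0}, the resonance condition $w(0)=0$ is equivalent to $\gamma + \int_0^\infty V(y)\varphi(y)\,dy = 0$. Subtracting this identity from \eqref{represantation_w} gives
$$
w(\zeta) = -i\zeta + \int_0^\infty V(y)\bigl[e^{i\zeta y}\varphi(y,\zeta) - \varphi(y)\bigr]\,dy.
$$
The goal then is to prove that this integral equals $i\zeta\int_0^\infty yV(y)\varphi(y)\,dy + o(\zeta)$, because then $w(\zeta) = -i\zeta\bigl(1-\int_0^\infty yV(y)\varphi(y)\,dy\bigr) + o(\zeta) = -iw_0\zeta + o(\zeta)$.

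To analyze the integral I would decompose the bracket as
$$
e^{i\zeta y}\varphi(y,\zeta) - \varphi(y) = (e^{i\zeta y}-1)\varphi(y,\zeta) + \bigl(\varphi(y,\zeta)-\varphi(y)\bigr).
$$
For the first piece, the elementary bound $|(e^{i\zeta y}-1)/\zeta| \leq y$ valid for $\im\zeta \geq 0$, combined with the observation that under $w(0)=0$ the solutions $\varphi$ and $\theta$ are proportional (so that $\varphi(x) = \theta(x)/\theta(0)$, with $\theta(0)\neq 0$ forced by $\varphi(0)=1$, hence $\varphi$ is bounded on $[0,\infty)$), supplies a dominant $Cy|V(y)|$ which lies in $L^1$ by \eqref{assu}; dominated convergence yields the contribution $i\zeta\int_0^\infty yV(y)\varphi(y)\,dy + o(\zeta)$. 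For the second piece, Lemma \ref{regularsolution} ensures $\varphi(y,\zeta)$ is entire in $\zeta^2$, so the pointwise limit of $(\varphi(y,\zeta)-\varphi(y))/\zeta$ vanishes; together with the bound $|\varphi(y,\zeta)-\varphi(y)|\leq C|\zeta|ye^{\im\zeta y}$ of Lemma \ref{diff}, this piece is $o(\zeta)$ as $\zeta\to 0$, $\im\zeta\geq 0$ (a small-$\zeta$ splitting of the $y$-integral at a cutoff $A$ and letting $A\to\infty$ at the end controls the exponential $e^{\im\zeta y}$ for large $y$ using \eqref{assu}).

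The remaining task, and the step I expect to require the most care, is the non-vanishing $w_0\neq 0$. Here I would exploit the proportionality $\varphi = \theta/\theta(0)$ together with the equation $\theta'' = V\theta$ from \eqref{homogen}. Integrating by parts twice,
$$
\int_0^\infty yV(y)\theta(y)\,dy = \int_0^\infty y\theta''(y)\,dy = \bigl[y\theta'(y) - \theta(y)\bigr]_0^\infty = -1 + \theta(0),
$$
where the boundary contribution at infinity uses the asymptotics \eqref{neueasympto} ($\theta(y)\to 1$ and $y\theta'(y)\to 0$). Dividing by $\theta(0)$ gives $\int_0^\infty yV(y)\varphi(y)\,dy = 1 - 1/\theta(0)$, so $w_0 = 1/\theta(0) \neq 0$, completing \eqref{asymptojostfct}.
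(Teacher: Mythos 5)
Your identification of the leading term and your use of Lemma \ref{lemma_w}, Lemma \ref{diff} and dominated convergence follow the paper's strategy, but your decomposition has a genuine gap: you detach the damping factor $e^{i\zeta y}$ from $\varphi(y,\zeta)$, and this breaks down for $\zeta$ in the open upper half-plane, which is precisely where the proposition is later used (on the small semicircle $C_\varepsilon^+$ in the contour arguments). For $\im\zeta>0$ the regular solution generically grows exponentially in $y$; the only available control is Lemma \ref{diff}, $|\varphi(y,\zeta)-\varphi(y)|\leq C|\zeta|\,y\,e^{\im\zeta y}$. Hence in your first piece the factor is $\varphi(y,\zeta)$, not $\varphi(y)$, so boundedness of $\varphi(y)$ does not give the dominant $Cy|V(y)|$ you claim; and in your second piece the bound carries the factor $e^{\im\zeta y}$. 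Under \eqref{assu} alone, $\int_0^\infty y|V(y)|e^{\im\zeta y}\,dy$ can be infinite for every $\zeta$ with $\im\zeta>0$ (take $V(y)=e^{-2\sqrt{y}}$), so your two separate integrals need not even converge absolutely, and the cutoff-at-$A$ device cannot repair this: for fixed small $\zeta$ with $\im\zeta>0$ the tail $\int_A^\infty y\,e^{\im\zeta y}|V(y)|\,dy$ is not small (possibly infinite), so the limits $\zeta\to0$ and $A\to\infty$ cannot be interchanged. As written, your argument only works for real $\zeta=k\to 0$. The fix is the paper's grouping: keep $e^{i\zeta y}$ glued to $\varphi(y,\zeta)$ and write
\begin{equation*}
e^{i\zeta y}\varphi(y,\zeta)-\varphi(y)=\bigl(e^{i\zeta y}-1-i\zeta y\bigr)\varphi(y)+i\zeta y\,\varphi(y)+e^{i\zeta y}\bigl(\varphi(y,\zeta)-\varphi(y)\bigr),
\end{equation*}
so that $|e^{i\zeta y}|=e^{-\im\zeta y}$ exactly cancels the exponential in Lemma \ref{diff}; both error terms are then dominated by $C|\zeta|\,y|V(y)|$, integrable by \eqref{assu}, and are $O(|\zeta|^2)$ pointwise, whence both are $o(\zeta)$ uniformly up to the real axis.

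Your proof that $w_0\neq 0$ is correct and genuinely different from the paper's: you integrate $\theta''=V\theta$ by parts against $y$, using \eqref{neueasympto} for the boundary terms, and obtain the explicit value $w_0=\theta(0)^{-1}$ (with $\theta(0)\neq0$ forced by $\varphi=\theta/\theta(0)$ and $\varphi(0)=1$). The paper instead reads off from the Volterra equation \eqref{varphi(x)} and the resonance condition that $\varphi(x)=w_0+o(1)$ as $x\to\infty$, and compares with $\varphi=c\,\theta$, $\theta(x)\to1$, $c\neq0$. Both arguments are sound; yours has the small bonus of identifying $w_0$ explicitly.
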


\begin{proof}
Since $w(0)=0$, we  have $ \int_0^{\infty} V(y) \varphi(y) \,dy  = -\gamma$ and therefore it follows from representation \eqref{represantation_w} that
\begin{eqnarray}\nonumber
w(\zeta) = \int_0^{\infty} \left( e^{i \zeta x} \varphi(x,\zeta) - \varphi(x) \right) V(x) \,dx - i \zeta= &-&i w_0 \zeta + \int_0^{\infty} \left( e^{i\zeta x} -1 -i\zeta x \right) \varphi(x) V(x) \,dx \\ \label{righthandw}
&+&  \int_0^{\infty} e^{i\zeta x} \left(\varphi(x,\zeta)-\varphi(x) \right) V(x) \,dx.
\end{eqnarray}
It can be shown that both integrals in the right-hand side of \eqref{righthandw} are $o(\zeta)$ as $\zeta \to 0$.
Since the function  $e^{i\zeta x} -1 -i\zeta x$ is bounded by $C|\zeta| x$ and is $O(|\zeta|^2)$ for all fixed $x$, it follows that the first integral in the right-hand side is $o(\zeta)$ as $ \varphi(x)$ is a bounded function. The second integral in the right-hand side of \eqref{righthandw} is also $o(\zeta)$. Indeed it follows from Lemma \ref{diff} that the function $e^{i\zeta x} \left(\varphi(x,\zeta)-\varphi(x) \right)$ is bounded by $C|\zeta| x$ and further it is $O(|\zeta|^2)$ for all fixed $x$. Thus the asymptotics \eqref{asymptojostfct} holds.

In order to prove that $w_0\neq 0$, we use equation \eqref{varphi(x)} to write
\begin{align*}
\varphi(x)   
& = x \left(\gamma + \int_0^\infty V(y) \varphi(y)\,dy \right) + w_0 - x \int_x^\infty V(y) \varphi(y)\,dy +  \int_x^\infty y V(y) \varphi(y)\,dy \\
 &= w_0 +o(1)
\end{align*}
as $x\to \infty$. On the other hand, $\varphi$ is proportional to $\theta$, which satisfies \eqref{neueasympto}. This shows that $w_0\neq 0$, as claimed.
\end{proof}


\section{Trace identities}

We now put the material from the previous sections together to prove our main result, namely, a family of trace formulas for the operator $H$. These identities provide a relation between the shift of the spectra between $H$ and $H_0$ and quantities involving the potential $V$. The spectral shift consists of two parts, one coming from the discrete spectrum (expressed in terms of the eigenvalues of $H$ and $H_0$) and the other one coming from the continuous spectrum (expressed in terms of the quantities $\eta$ and $a$). 

In this section we assume that $\int_0^{\infty} (1+x) |V(x)| \,dx < \infty$, which guarantees that $H$ has only a finite number $N$ of negative eigenvalues $\lambda_1,\ldots,\lambda_N$. We recall that $H_0$ has a single negative eigenvalue $-\gamma^2$ if $\gamma<0$ and no negative eigenvalues if $\gamma\geq 0$. We also recall that $M_s(\gamma)$ was defined at the end of the introduction.

While we are mainly interested in trace formulas of integer and half-integer order, we prove a version of these formulas for every complex $s$ with $\re s>0$. We proceed by analytic continuation, where the starting point is the following proposition.

\begin{proposition}
Suppose that $\int_0^{\infty} (1+x) |V(x)| \,dx < \infty$, $s\in \C$, $0< \re s < 1/2$ and define
\begin{equation}\label{F,G}
F(s):= \int_0^{\infty} \ln a(k) k^{2s-1} \,dk, \quad G(s):= \int_0^{\infty} \eta(k) k^{2s-1} \,dk.
\end{equation}
Then 
\begin{equation}\label{trformula}
\frac{\pi}{2s} \sum_{j=1}^N |\lambda_j|^s  - \frac{ \pi}{2s} M_s(\gamma) = \sin(\pi s) F(s) - \cos(\pi s) G(s).
\end{equation}
\end{proposition}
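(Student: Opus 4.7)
The plan is to evaluate $F(s)+iG(s) = \int_0^\infty \ln D(k^2)\,k^{2s-1}\,dk$ by shifting the integration contour from the positive real axis to the positive imaginary axis and exploiting the analytic structure of the perturbation determinant. From Section 3, $D(\zeta^2)=w(\zeta)/(\gamma-i\zeta)$ is meromorphic on $\{\im\zeta>0\}$ with simple zeros at $\zeta=i\kappa_j$ (where $\lambda_j=-\kappa_j^2$) and, when $\gamma<0$, a simple pole at $\zeta=i|\gamma|$ (corresponding to the eigenvalue $-\gamma^2$ of $H_0$); moreover $D(\zeta^2)\to 1$ at infinity, and $\ln D(k^2)=\ln a(k)+i\eta(k)$ on the positive real axis.

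To obtain a function analytic and nonvanishing on the upper half-plane, introduce the Blaschke-regularised determinant
\[
\Delta(\zeta) := D(\zeta^2)\,\prod_{j=1}^N \frac{\zeta+i\kappa_j}{\zeta-i\kappa_j}\, p_\gamma(\zeta), \qquad p_\gamma(\zeta) := \begin{cases}\dfrac{\zeta-i|\gamma|}{\zeta+i|\gamma|}, & \gamma<0,\\ 1, & \gamma\geq 0.\end{cases}
\]
The auxiliary factors are unimodular on $\R$, so $|\Delta(k)|=a(k)$, and tracking the sign of $D(\zeta^2)$ across consecutive singular points on the imaginary axis confirms $\Delta(i\kappa)>0$ for every $\kappa>0$. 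Fixing the branch of $\log\Delta$ by $\log\Delta(\infty)=0$, on the real axis one has $\ln\Delta(k)=\ln a(k)+i\tilde\eta(k)$ with
\[
\tilde\eta(k)=\eta(k)+2\sum_{j=1}^N \arctan(\kappa_j/k)-2\arctan(|\gamma|/k)\,\mathbf{1}_{\{\gamma<0\}},
\]
while $\ln\Delta(i\kappa)\in\R$. The asymptotics $w(\zeta)=-i\zeta+O(1)$ imply $\ln\Delta(\zeta)=O(|\zeta|^{-1})$ as $|\zeta|\to\infty$.

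I would then apply Cauchy's theorem to $\ln\Delta(\zeta)\,\zeta^{2s-1}$, with $\zeta^{2s-1}$ defined through the branch of $\log\zeta$ cut along the negative imaginary axis, over the boundary of the first quadrant indented at $0$ and $\infty$. The large arc contributes $O(R^{2\re s-1})\to 0$ for $\re s<1/2$; the small arc at the origin vanishes for $\re s>0$, even in the resonance case where $\ln\Delta$ has at worst a logarithmic singularity. Parametrising $\zeta=i\kappa$ on the imaginary axis yields
\[
\int_0^\infty \ln\Delta(k)\,k^{2s-1}\,dk=-e^{i\pi s}\int_0^\infty \ln\Delta(i\kappa)\,\kappa^{2s-1}\,d\kappa=:-e^{i\pi s}P(s),
\]
where $P(s)\in\R$ for real $s$. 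Using the Mellin identity $\int_0^\infty \arctan(\kappa/k)\,k^{2s-1}\,dk=\pi\kappa^{2s}/(4s\cos\pi s)$ (substitute $u=k/\kappa$, integrate by parts, reduce to a beta integral), the left-hand side evaluates to
\[
F(s)+iG(s)+\frac{i\pi}{2s\cos(\pi s)}\Bigl(\sum_{j=1}^N |\lambda_j|^s - M_s(\gamma)\Bigr)=-e^{i\pi s}P(s).
\]
Equating real and imaginary parts for real $s\in(0,1/2)$ gives $F(s)=-\cos(\pi s)P(s)$ from the real part; inserting this into the imaginary-part equation eliminates $P(s)$ and, after multiplying by $-\cos(\pi s)$, produces the claimed identity. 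Extension to the strip $0<\re s<1/2$ then follows by analytic continuation, since both sides are analytic there.

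The main technical obstacle is a rigorous execution of the contour shift: constructing a continuous single-valued branch of $\log\Delta$ on $\overline{\{\im\zeta>0\}}\setminus\{0\}$ with the stated decay, and, most delicately, verifying the positivity $\Delta(i\kappa)>0$ by a careful audit of sign changes of $D(\zeta^2)$ across each eigenvalue $i\kappa_j$ and (if $\gamma<0$) across $i|\gamma|$. A secondary subtlety is the neighbourhood of $\zeta=0$ in the resonance case, where $w(\zeta)=-iw_0\zeta+o(\zeta)$ produces a $\log\kappa$-type singularity of $\ln\Delta(i\kappa)$ that is absorbed by $\kappa^{2s-1}$ precisely because $\re s>0$. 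The arctan Mellin calculation and the algebraic elimination of $P(s)$ are routine.
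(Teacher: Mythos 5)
Your argument is correct in substance but follows a genuinely different route from the paper. The paper applies the residue theorem to the logarithmic derivative $\frac{d}{d\zeta}\ln\bigl(w(\zeta)/(\gamma-i\zeta)\bigr)\,\zeta^{2s}$ over the half-disc contour $\Gamma_{R,\varepsilon}$, so the eigenvalues of $H$ (and the $H_0$-eigenvalue $-\gamma^2$ when $\gamma<0$) enter directly as residues $e^{i\pi s}\kappa_j^{2s}$; an integration by parts along the real axis together with $w(-k)=\overline{w(k)}$ then turns the remaining boundary integral into $2s(e^{2i\pi s}-1)F(s)-2is(e^{2i\pi s}+1)G(s)$, giving \eqref{trformula} at once, for complex $s$ directly. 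You instead remove the zeros and the pole of $D(\zeta^2)=w(\zeta)/(\gamma-i\zeta)$ by Blaschke factors, rotate the Mellin integral of $\ln\Delta$ from the positive real to the positive imaginary axis, recover the eigenvalue sums through the Mellin transform of $\arctan(\kappa_j/k)$, and eliminate the unknown real quantity $P(s)$ by separating real and imaginary parts for real $s\in(0,1/2)$ and continuing analytically. Both proofs rest on exactly the same inputs (Lemma \ref{zero} for location and simplicity of the zeros of $w$, the asymptotics \eqref{ruscho} at infinity, and \eqref{asymptojostfct} with $w_0\neq0$ at the origin in the resonance case); your rotation buys a zero-free, single-valued $\ln\Delta$ on the quadrant at the price of the Blaschke bookkeeping, while the paper's argument-principle route avoids any branch or positivity discussion. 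Two small points: with counterclockwise orientation of the quadrant boundary the rotation gives $\int_0^\infty\ln\Delta(k)k^{2s-1}\,dk=+e^{i\pi s}P(s)$, not $-e^{i\pi s}P(s)$ --- a harmless slip, since $P(s)$ drops out of the elimination and the final identity is unchanged; and the positivity $\Delta(i\kappa)>0$ needs no sign audit: by \eqref{regeln} $w(i\kappa)$ is real, so $\Delta(i\kappa)$ is real, and after the cancellations it is continuous and nonvanishing on $(0,\infty)$ and tends to $1$ as $\kappa\to\infty$, hence positive.
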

\begin{proof}
Let $\Gamma_{R,\varepsilon}$ be the contour (with counterclockwise direction) which consists of the halfcircles $C_R^+ = \{ |\zeta| =R, \im \zeta \geq 0 \}$ and $C_{\varepsilon}^+ = \{ |\zeta| = \varepsilon , \ \im \zeta \geq 0 \}$ and the intervals $(\varepsilon, R)$ and $(-R,-\varepsilon)$. 
\vspace*{4cm}
\begin{center}
\begin{figure}[h!]
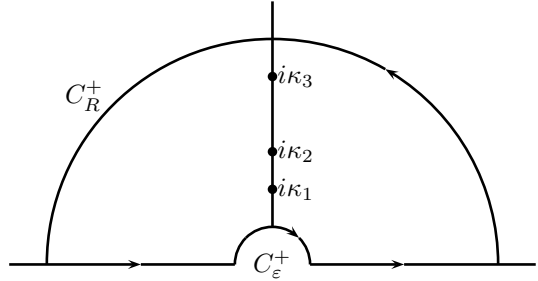

\psset{unit=0.5cm}
\psarc[linewidth=1pt]{-}(0,0){1}{0}{45}
\psarc[linewidth=1pt]{<-}(0,0){1}{45}{135}
\psarc[linewidth=1pt](0,0){1}{135}{180}
\psarc[linewidth=1pt]{->}(0,0){6}{0}{60}
\psarc[linewidth=1pt](0,0){6}{60}{180}
\psline[linewidth=1pt]{->}(1,0)(3.5,0)
\psline[linewidth=1pt]{-}(3.5,0)(7,0)
\psline[linewidth=1pt]{->}(-7,0)(-3.5,0)
\psline[linewidth=1pt]{-}(-3.5,0)(-1,0)
\psline[linewidth=1pt]{-}(0,1)(0,7)
\psdot(0,3)
\psdot(0,2)
\psdot(0,5)
\rput(0.5,2){$\ i\kappa_1$}
\rput(0.5,3){$\ i\kappa_2$}
\rput(0.5,5){$\ i\kappa_3$}
\rput(-5,4.5){$C_R^+$}
\rput(0,0){$C_{\varepsilon}^+$}
\caption{contour of integration}
\end{figure}
\end{center}
The argument of $\zeta \in \C$ is fixed by the condition  $0 \le \arg \zeta \leq \pi$. 
We consider the integral
\begin{equation}\nonumber
\int_{\Gamma_{R, \varepsilon}} \frac{\frac{d}{d\zeta}\left( \frac{w(\zeta)}{\gamma -i\zeta} \right)}{\left( \frac{w(\zeta)}{\gamma -i\zeta} \right)} \zeta^{2s} \,d\zeta.
\end{equation}
The set of the singularities of the integrand is the set of zeros of the function $w(\zeta)/(\gamma - i\zeta)$.

Calculating the integral by residues, we see that for $\kappa_j = |\lambda_j|^{1/2}$
\begin{equation}\label{Res}
\int_{\Gamma_{R,\varepsilon}} \frac{\frac{d}{d\zeta}\left( \frac{w(\zeta)}{\gamma - i\zeta} \right)}{\left( \frac{w(\zeta)}{\gamma - i\zeta} \right)} \zeta^{2s} \,d\zeta
=
2 \pi i \sum_{j=1}^N \mbox{Res}_{\zeta=i \kappa_j} \frac{\frac{d}{d\zeta}\left( \frac{w(\zeta)}{\gamma - i\zeta} \right)}{\left( \frac{w(\zeta)}{\gamma-i\zeta} \right)} \zeta^{2s} - 2\pi i \begin{cases} \mbox{Res}_{\zeta=-i \gamma} \frac{\frac{d}{d\zeta}\left( \frac{w(\zeta)}{\gamma - i\zeta} \right)}{\left( \frac{w(\zeta)}{\gamma-i\zeta} \right)} \zeta^{2s},  &\mbox{if}\  \gamma < 0 \\ 0,\   &\mbox{if} \  \gamma \geq 0.\end{cases}
\end{equation}
Since by Lemma \ref{zero}, zeros $i \kappa_j$ of $w(\zeta)$ are simple, the residues work out to be $e^{i\pi s} \kappa_j^{2s}$. Hence,
\begin{equation}\label{istanbul}
\int_{\Gamma_{R,\varepsilon}} \frac{\frac{d}{d\zeta}\left( \frac{w(\zeta)}{\gamma - i\zeta} \right)}{\left( \frac{w(\zeta)}{\gamma - i\zeta} \right)} \zeta^{2s} \,d\zeta
=
2 \pi i \ e^{i \pi s} \sum_{j=1}^N  \kappa_j^{2s} - 2 \pi i \ e^{i \pi  s} M_s(\gamma).
\end{equation}
Next, we show that the integral over the semicircle $C_r^+$ tends to zero as $r \to \infty$ or $ r \to 0$. Integrating by parts, we see that
\begin{eqnarray}\nonumber
\int_{C_r^+}  \frac{\frac{d}{d\zeta}\left( \frac{w(\zeta)}{\gamma - i\zeta} \right)}{\left( \frac{w(\zeta)}{\gamma - i\zeta} \right)} \zeta^{2s} \,d\zeta = -2s \int_{C_r^+} \ln \left( \frac{w(\zeta)}{\gamma - i\zeta} \right) \zeta^{2s-1} \,d \zeta + \ln \left( \frac{w(-r)}{\gamma + ir} \right)(-r)^{2s} - \ln \left( \frac{w(r)}{\gamma - ir} \right)r^{2s} .
\end{eqnarray}
Note, that we can choose $\ln (w(\zeta)/ (\gamma - i\zeta))$ as a continuous function on $C^+_r$. If $r \to \infty$, then this integral tends to zero for $\re \ s < 1/2$ because of \eqref{1}. If $r \to 0$, then the integral also tends to zero. Indeed, this follows from the fact that either $w(0) \neq 0$ or $w(\zeta)$ satisfies \eqref{asymptojostfct} with $w_0 \neq 0$. 
Therefore passing to the limits $R\to \infty$ and $\varepsilon \to 0$ in equality  \eqref{istanbul}, we obtain that

\begin{equation}\label{ev}
\int_{-\infty}^{\infty} \frac{\frac{d}{dk}\left( \frac{w(k)}{\gamma - ik} \right)}{\left( \frac{w(k)}{\gamma - ik} \right)} k^{2s} \,dk
=
2 \pi i \ e^{i \pi s} \sum_{j=1}^N  \kappa_j^{2s} - 2 \pi i \ e^{i \pi s}\begin{cases} (-\gamma)^{2s}, \ \  \mbox{if}\  \gamma < 0 \\ 0,\ \quad \quad \quad  \mbox{if}\  \gamma \geq 0. \end{cases}
\end{equation}
Integrating in the left-hand side by parts and taking into account relations \eqref{realw} and \eqref{nurw}, we obtain
\begin{eqnarray}\nonumber
\int_{-\infty}^{\infty}  \frac{\frac{d}{dk}\left( \frac{w(k)}{\gamma - ik} \right)}{\left( \frac{w(k)}{\gamma - ik} \right)} k^{2s} \,dk
&=& -2s \int_{-\infty}^{\infty} \ln \left(\frac{w(k)}{\gamma - ik} \right) k^{2s-1} \,dk
= -2s  \int_{-\infty}^{\infty} (\ln a(k) +i \eta(k))k^{2s-1} \,dk \\ \nonumber
&=& -2s  \int_{0}^{\infty} (\ln a(k) +i \eta(k))k^{2s-1} \,dk +  2s e^{2i \pi s} \int_{0}^{\infty} (\ln a(k) - i \eta(k))k^{2s-1} \,dk \\ \nonumber
&=& 2s( e^{2i \pi s}-1)F(s) -2is ( e^{2i \pi s} + 1)G(s).
\end{eqnarray}
Comparing this equation with \eqref{ev}, we arrive at \eqref{trformula}.
\end{proof}
In order to prove trace identities for arbitrary powers $s\in \C_+$, we need the analytic continuation of the functions $F(s)$ and $G(s)$ to the entire half-plane $\re s >0$. 

\begin{lemma}\label{wichtig}
Let estimates \eqref{glatt} and $\int_0^{\infty} (1+x) |V(x)| \,dx < \infty$ be satisfied. Then the functions $F$ and $G$ are meromorphic in the half-plane $ \re s>0$. The function $F$ is analytic everywhere except for simple poles at integer points $s=n, \ n \in \N,$ with residues
$$
\mathrm{Res}_{s=n} F(s) = (-1)^{n+1} 2^{-2n-1} \ell_{2n}, \quad n \in \N.
$$
If $\re s <1$, then representation \eqref{F,G} for $F(s)$ remains true. If $ n< \re s< n+1$, then
\begin{equation}\nonumber
F(s)= \int_0^{\infty} \left( \ln a(k) - \sum_{j=1}^n (-1)^j \ell_{2j} (2k)^{-2j} \right) k^{2s-1} \,dk.
\end{equation}
The function $G$ is analytic everywhere except for simple poles at half-integer points $s=n+1/2,\  n\in \N_0,$ with residues 
$$
\mathrm{Res}_{s=n+1/2} G(s) =  (-1)^n 2^{-2n-2} \ell_{2n+1}, \quad n \in \N_0.
$$
\\
If $n \geq 1$ and $n-1/2 < \re s < n+1/2$, then
\begin{equation}\label{Gfort}
G(s) = \int_0^{\infty} \left( \eta(k) - \sum_{j=0}^{n-1} (-1)^{j+1} \ell_{2j+1} (2k)^{-2j-1} \right) k^{2s-1} \,dk.
\end{equation}
\end{lemma}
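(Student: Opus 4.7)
The plan is to combine the high-energy asymptotic expansion from Corollary \ref{lnD} with the low-energy information from Section 4.2 to analytically continue $F$ and $G$ from the strip $0<\re s<1/2$ to the full half-plane $\re s>0$.

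First I would split each Mellin integral at $k=1$ and argue that the $(0,1)$ piece already defines a holomorphic function on $\re s>0$. This rests on showing that $\ln a(k)$ is at worst logarithmic and that $\eta(k)$ stays bounded as $k\to 0^+$. Using $a(k)=|w(k)|/\sqrt{\gamma^2+k^2}$ together with continuity of $w$ at $0$, I would distinguish the non-resonant case ($w(0)\neq 0$) from the resonant case, where the asymptotics $w(\zeta)=-iw_0\zeta+o(\zeta)$ from the preceding section gives $\ln a(k)=\pm\ln k+O(1)$ depending on whether $\gamma$ vanishes. The phase is controlled via $e^{i\eta(k)}=w(k)/[a(k)(\gamma-ik)]$, which is bounded in every case. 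Hence $k^{2s-1}\ln a(k)$ and $k^{2s-1}\eta(k)$ are integrable on $(0,1)$ whenever $\re s>0$, and the resulting functions of $s$ are holomorphic there.

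The meromorphic continuation and the poles then come entirely from the tail $(1,\infty)$ via the asymptotic series
\[
\ln a(k) \sim \sum_{j=1}^\infty (-1)^j \ell_{2j}(2k)^{-2j}, \qquad \eta(k) \sim \sum_{j=0}^\infty (-1)^{j+1}\ell_{2j+1}(2k)^{-2j-1},
\]
provided by Corollary \ref{lnD}. Subtracting the first $n$ terms of the first series from $\ln a(k)$ on $(1,\infty)$ leaves a remainder of order $k^{-2n-2}$, integrable against $k^{2s-1}$ for $\re s<n+1$; the subtracted monomials contribute the elementary Mellin integrals $\int_1^\infty (2k)^{-2j} k^{2s-1}\,dk=2^{-2j}/(2j-2s)$, each a meromorphic function on $\C$ with a simple pole at $s=j$ of residue $-\tfrac12$. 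Multiplying by $(-1)^j \ell_{2j}$ and summing over $j=1,\dots,n$ produces simple poles at $s=1,\dots,n$ with residues $(-1)^{j+1}2^{-2j-1}\ell_{2j}$, as claimed. The analogous subtraction for $\eta$ yields the half-integer pole structure of $G$, since $\int_1^\infty (2k)^{-2j-1} k^{2s-1}\,dk$ has its pole at $s=j+1/2$ with residue matching the stated formula.

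To pass from this split representation to the compact integral formula stated in the lemma, I would extend the subtracted partial sum down to $(0,1)$. In the strip $n<\re s<n+1$ each term $(2k)^{-2j}k^{2s-1}$ with $j\le n$ is integrable on $(0,1)$ and yields $2^{-2j}/(2s-2j)$, which cancels the corresponding tail contribution $2^{-2j}/(2j-2s)$ via $(2s-2j)^{-1}+(2j-2s)^{-1}=0$. What remains is the single integral $\int_0^\infty \bigl(\ln a(k)-\sum_{j=1}^n (-1)^j \ell_{2j}(2k)^{-2j}\bigr)k^{2s-1}\,dk$, and an identical cancellation yields \eqref{Gfort}. I expect the main technical obstacle to lie in the case analysis of the low-energy behavior of $\ln a$ and $\eta$ (resonant versus non-resonant, and the role of the sign of $\gamma$); once this is handled, the rest is essentially Mellin-transform bookkeeping built on top of the high-energy asymptotics of Section 4.
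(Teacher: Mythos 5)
Your proposal is correct and follows essentially the same route as the paper: the paper likewise splits the Mellin integrals at $k=1$, treats the $(0,1)$ piece as analytic for $\re s>0$, controls the tail with the expansions \eqref{a}, and reads off the poles and residues from the explicit integrals of the subtracted monomials, $\int_0^1 (2k)^{-2j}k^{2s-1}\,dk = 2^{-2j-1}(s-j)^{-1}$ (your low-energy case analysis of $\ln a$ and $\eta$ near $k=0$ is in fact more detailed than the paper's, which simply asserts analyticity of the $(0,1)$ piece). One cosmetic slip: the residue of $2^{-2j}/(2j-2s)$ at $s=j$ is $-2^{-2j-1}$, not $-\tfrac12$, but the residues you finally state for $F$ and $G$ are the correct ones.
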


\begin{proof}
We can write the function $F$, given in Lemma \ref{wichtig} as follows
\begin{eqnarray}\label{Fbeweis}
F(s) = \int_0^1 \ln a(k) k^{2s-1} \,dk +  \int_1^{\infty} \left( \ln a(k) -  \sum_{j=1}^n (-1)^j \ell_{2j} (2k)^{-2j} \right) k^{2s-1} \,dk \\ \nonumber
- \int_0^1 \sum_{j=1}^n (-1)^j \ell_{2j} (2k)^{-2j} k^{2s-1} \,dk.
\end{eqnarray}
The first integral in the right-hand side of equation \eqref{Fbeweis} is an analytic function of $s$ in the entire half-plane $\re s>0$. The second integral is in view of \eqref{a} an analytic function of $s$ in the strip $0< \re s <n+1$. 
For $\re s >n$, we have
$$
\int_0^1 \sum_{j=1}^n (-1)^j \ell_{2j} (2k)^{-2j} k^{2s-1} \,dk = \sum_{j=1}^n (-1)^{j} 2^{-2j-1} \ell_{2j} (s-j)^{-1}.
$$
Thus, the function $F$ is an analytic function in the strip $n<\re s < n+1$.\\Similarly, we split the integral in the right-hand side of \eqref{Gfort}. Note, that we now have for $\ n \geq 1$ and $\re s > n-1/2$,
\begin{equation}\nonumber 
 \int_0^1  \sum_{j=0}^{n-1} (-1)^{j+1} \ell_{2j+1} (2k)^{-2j-1}  k^{2s-1} \,dk =  \sum_{j=0}^{n-1} (-1)^{j+1} \ell_{2j+1} 2^{-2j-2} (s-j-1/2)^{-1}.
 \end{equation}
Therefore, it follows with analog arguments  as for $F$ that the function $G$, given in \eqref{Gfort}, is an analytic function of $s$ in the strip $n-1/2 < \re s <n+1/2$.
\end{proof}

\begin{theorem}
Let estimates \eqref{glatt} and $\int_0^{\infty} (1+x) |V(x)| \,dx < \infty$ be satisfied. Then
\begin{eqnarray}\label{neu1}
\sum_{j=1}^N |\lambda_j|^{1/2} - M_{1/2}(\gamma)- \frac{1}{\pi} \int_0^{\infty} \ln a(k) \,dk = \frac{1}{4} \ell_1
\end{eqnarray}
and for $n \geq 1,\ n \in \N$,
\begin{equation}\label{neu2}
\sum_{j=1}^N  |\lambda_j|^n - M_n(\gamma) + (-1)^n \frac{2n}{\pi}  \int_0^{\infty} \left( \eta(k) - \sum_{j=0}^{n-1} (-1)^{j+1} \ell_{2j+1} (2k)^{-2j-1} \right) k^{2n-1} \,dk = -\frac{n}{2^{2n}} \ell_{2n},
\end{equation}
\begin{eqnarray}\nonumber
\sum_{j=1}^N |\lambda_j|^{n+1/2} - M_{n+1/2}(\gamma) + (-1)^{n+1} \frac{2n+1}{\pi} \int_0^{\infty} \left( \ln a(k) - \sum_{j=1}^n (-1)^j \ell_{2j} (2k)^{-2j} \right) k^{2n} \,dk \\ \label{neu3}
= \frac{2n+1}{2^{2n+2}} \ell_{2n+1}.
\end{eqnarray}
The coefficients $\ell_n$ are given as in \eqref{ln}.
\end{theorem}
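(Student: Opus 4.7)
The plan is to start from the identity \eqref{trformula}, established in the previous proposition only for $0<\re s<1/2$, and to extend it by analytic continuation to the open half-plane $\re s>0$. Evaluating the extended identity at $s=1/2$, at positive integers $s=n$, and at positive half-integers $s=n+1/2$ will then produce \eqref{neu1}, \eqref{neu2} and \eqref{neu3}, respectively.

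First I would verify that both sides of \eqref{trformula} admit analytic continuation to the whole of $\re s>0$. The left-hand side $\frac{\pi}{2s}\left(\sum_{j=1}^N |\lambda_j|^s - M_s(\gamma)\right)$ is manifestly analytic there. For the right-hand side, Lemma \ref{wichtig} gives exactly what is required: $F$ is meromorphic in $\re s>0$ with simple poles only at the positive integers $s=n$, and $G$ is meromorphic there with simple poles only at the positive half-integers $s=n+1/2$. At the first set of points the factor $\sin(\pi s)$ has a simple zero which kills the pole of $F(s)$; at the second set the factor $\cos(\pi s)$ has a simple zero which kills the pole of $G(s)$. Hence the right-hand side extends holomorphically across these points, and \eqref{trformula} persists on all of $\re s>0$ by uniqueness of analytic continuation.

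With this in hand, the three formulas follow from three residue/limit computations. At $s=1/2$ we have $\sin(\pi s)F(s)\to F(1/2)=\int_0^\infty\ln a(k)\,dk$ and, using $\cos(\pi s)\sim -\pi(s-1/2)$ near $s=1/2$, also $\cos(\pi s)G(s)\to -\pi\,\mathrm{Res}_{s=1/2}G=-\pi\ell_1/4$; substituting into \eqref{trformula} and dividing by $\pi$ yields \eqref{neu1}. At $s=n$ with $n\geq 1$, $\sin(\pi s)F(s)\to \pi(-1)^n\,\mathrm{Res}_{s=n}F=-\pi 2^{-2n-1}\ell_{2n}$ while $\cos(\pi n)G(n)=(-1)^n G(n)$, with $G(n)$ represented by the subtracted integral \eqref{Gfort}; multiplying by $2n/\pi$ and rearranging gives \eqref{neu2}. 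At $s=n+1/2$ with $n\geq 1$, $\cos(\pi s)G(s)\to -\pi(-1)^n\cdot(-1)^n 2^{-2n-2}\ell_{2n+1}=-\pi 2^{-2n-2}\ell_{2n+1}$ while $\sin(\pi s)F(s)=(-1)^n F(n+1/2)$, with $F(n+1/2)$ given by the subtracted representation from Lemma \ref{wichtig}; multiplying by $(2n+1)/\pi$ and rearranging gives \eqref{neu3}.

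I do not anticipate a serious obstacle: once the analytic continuation of \eqref{trformula} is established, the argument becomes pure bookkeeping, using the values $\sin(\pi n)=0$, $\cos(\pi n)=(-1)^n$, $\cos(\pi(n+1/2))=0$, $\sin(\pi(n+1/2))=(-1)^n$ together with the explicit residues of $F$ and $G$ from Lemma \ref{wichtig}. The only delicate point is the simultaneous presence of a simple zero and a simple pole at the evaluation points, which has to be handled by l'H\^opital's rule; the resulting finite contributions produce precisely the potential-dependent right-hand sides $-n\,\ell_{2n}/2^{2n}$ and $(2n+1)\ell_{2n+1}/2^{2n+2}$ appearing in \eqref{neu2} and \eqref{neu3}.
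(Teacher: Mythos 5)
Your proposal is correct and follows essentially the same route as the paper: extend \eqref{trformula} from the strip $0<\re s<1/2$ to the half-plane $\re s>0$ via the meromorphic continuations of $F$ and $G$ from Lemma \ref{wichtig}, noting that the zeros of $\sin(\pi s)$ and $\cos(\pi s)$ cancel the simple poles of $F$ and $G$, and then evaluate at $s=1/2$, $s=n$ and $s=n+1/2$, where the limits $\sin(\pi s)F(s)\to(-1)^n\pi\,\mathrm{Res}_{s=n}F(s)$ and $\cos(\pi s)G(s)\to-(-1)^n\pi\,\mathrm{Res}_{s=n+1/2}G(s)$ together with the subtracted integral representations of $G(n)$ and $F(n+1/2)$ yield \eqref{neu1}, \eqref{neu2} and \eqref{neu3}. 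Your residue and sign bookkeeping agrees with the paper's identities, so no gap remains.
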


\begin{proof}
Using the analytic continuation, given in Lemma \ref{wichtig}, formula \eqref{trformula} can be extended to all $s$ in the half-plane $\re s >0$. In particular, setting $s=n,\ n \in \N$, we obtain
$$
 \frac{\pi}{2n} \sum_{j=1}^N |\lambda_j|^n -  \frac{\pi}{2n}  M_n(\gamma) = (-1)^n \left( \pi \ \mbox{Res}_{s=n} F(s)  - G(n) \right).  
$$
Taking into account Lemma \ref{wichtig}, we conclude formula \eqref{neu2}. Similarly, we have for $s= n+1/2$, where $n\in \N_0$, the following identity
$$
\frac{\pi}{2n+1}  \sum_{j=1}^N |\lambda_j|^{n+1/2} -  \frac{\pi}{2n+1} M_{n+1/2}(\gamma)
= (-1)^n \left( F(n+1/2) + \pi  \ \mbox{Res}_{s=n+1/2} G(s) \right).
$$
Again, in view of Lemma \ref{wichtig}, we conclude formulas \eqref{neu1} and \eqref{neu3}.
\end{proof}

We compute the first four trace formulas.
\begin{corollary}
Let estimates \eqref{glatt} and $\int_0^{\infty} (1+x) |V(x)| \,dx < \infty$ be satisfied. Then
\begin{eqnarray}\nonumber 
\sum_{j=1}^N |\lambda_j|^{1/2} - M_{1/2}(\gamma)- \frac{1}{\pi} \int_0^{\infty} \ln a(k) \,dk &=&- \frac{1}{4} \int_0^{\infty} V(x) \,dx, \\ \nonumber
\sum_{j=1}^N |\lambda_j| - M_1(\gamma)- \frac{2}{\pi}  \int_0^{\infty} \left( \eta(k) -\frac{\int_0^{\infty}V(x)\,dx}{2k} \right) k \,dk &=& -\frac 14 \,V(0), \\ \nonumber
\sum_{j=1}^N |\lambda_j|^{3/2} - M_{3/2}(\gamma) +  \frac{3}{\pi}  \int_0^{\infty} \left( \ln a(k) + \frac{V(0)}{(2k)^2} \right) k^2 \,dk &=& \frac{3}{16} \left(\int_0^{\infty}V^2(x)\,dx-V'(0) +  4 \gamma V(0) \right),
\end{eqnarray}
\begin{eqnarray}\nonumber
\sum_{j=1}^N |\lambda_j|^2 - M_2(\gamma)+ \frac{4}{\pi}  \int_0^{\infty} \left( \eta(k) -\frac{\int_0^{\infty}V(x)\,dx}{2k} - \frac{\int_0^{\infty} V^2(x) \,dx + 4 \gamma V(0) -V'(0)}{(2k)^3} \right) k^3 \,dk= - \frac18 \ell_4,
\end{eqnarray}
where 
$$
\ell_4 =V''(0) +8 \gamma^2 V(0) -4 \gamma V'(0) - 2V^2(0).
$$
\end{corollary}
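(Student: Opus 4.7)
The plan is a direct specialization of the theorem just proved. Equations \eqref{neu1}, \eqref{neu2}, \eqref{neu3} already provide the four required identities once we insert the numerical values of $\ell_1,\ell_2,\ell_3,\ell_4$. The explicit formulas
\[
\ell_1 = -\int_0^\infty V(x)\,dx,\quad \ell_2 = V(0),\quad \ell_3 = 4\gamma V(0) - V'(0) + \int_0^\infty V^2(x)\,dx,
\]
\[
\ell_4 = V''(0) - 2V^2(0) - 4\gamma V'(0) + 8\gamma^2 V(0),
\]
were derived in the paragraph following Corollary~\ref{lnD} from the recursion \eqref{ln} and the coefficients $d_n$ of Lemma~\ref{D}. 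Nothing more is needed, so the whole proof amounts to book-keeping of signs and factors in the four target cases.

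First I would obtain the $s=1/2$ identity by reading \eqref{neu1} verbatim and substituting $\ell_1 = -\int_0^\infty V\,dx$, which produces the right-hand side $-\tfrac14\int_0^\infty V(x)\,dx$. Next, for the $s=1$ identity I would set $n=1$ in \eqref{neu2}; the sum inside the integral collapses to the single term $(-1)^{1}\ell_1(2k)^{-1}=-\ell_1/(2k)$, so that $\eta(k)-\sum\ldots = \eta(k)-(2k)^{-1}\int_0^\infty V\,dx$, while the prefactor $(-1)^1\,\tfrac{2}{\pi}$ gives the $-\tfrac{2}{\pi}$ displayed, and the right-hand side $-\tfrac{1}{4}\ell_2$ becomes $-\tfrac14 V(0)$.

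For the $s=3/2$ identity I would set $n=1$ in \eqref{neu3}: the sum inside reduces to $(-1)^{1}\ell_2(2k)^{-2} = -V(0)/(2k)^2$, so the bracket is $\ln a(k)+V(0)/(2k)^2$; the prefactor $(-1)^2\tfrac{3}{\pi}$ is $+\tfrac{3}{\pi}$, and the right-hand side $\tfrac{3}{16}\ell_3$ is exactly $\tfrac{3}{16}(\int_0^\infty V^2\,dx - V'(0)+4\gamma V(0))$. Finally, for $s=2$ I would set $n=2$ in \eqref{neu2}; the sum inside is $-\ell_1/(2k)+\ell_3/(2k)^3$, and subtracting it from $\eta(k)$ produces precisely the integrand displayed in the corollary once $\ell_1$ and $\ell_3$ are replaced by their explicit expressions; the prefactor $(-1)^2\tfrac{4}{\pi}=+\tfrac{4}{\pi}$ matches the claim, and the right-hand side is $-\tfrac{2}{2^{4}}\ell_4 = -\tfrac{1}{8}\ell_4$, with $\ell_4$ already stated in the desired closed form.

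There is no genuine obstacle: all four identities are immediate corollaries of the theorem. The only place where care is needed is to keep track of the signs $(-1)^n$, $(-1)^{n+1}$ in the asymptotic expansion \eqref{a}, which propagate into the subtractions of $\ell_1,\ell_2,\ell_3$ inside the integrands, so I would double-check them against the recursion \eqref{ln} and the computation of $d_n$ to make sure the leading divergent behavior of $\ln a(k)$ and $\eta(k)$ is exactly cancelled by the subtracted terms in each integrand.
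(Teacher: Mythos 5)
Your proposal is correct and coincides with the paper's treatment: the paper gives no separate argument for this corollary, since it is exactly the specialization of \eqref{neu1}, \eqref{neu2} with $n=1,2$, and \eqref{neu3} with $n=1$, combined with the explicit values of $\ell_1,\dots,\ell_4$ computed after Corollary \ref{lnD}. Your sign bookkeeping (the factors $(-1)^n$, $(-1)^{n+1}$ and the subtracted partial sums of the expansions \eqref{a}) checks out in all four cases.
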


Finally, we prove a trace formula of order zero for the operator $H$ with boundary conditions \eqref{bc2}. Such formulas are called in the literature the \textit{Levinson formula} and 
relate the number of negative eigenvalues of $H$ to the phase shift $\eta$. 

We define $\eta(0) = \lim_{\zeta \to 0+} \eta(\zeta)$. Obviously this limit exists if $w(0) \neq 0$. In the case $w(0)=0$ the existence follows from asymptotics \eqref{asymptojostfct}.

\begin{theorem}\label{levinson}
Suppose \eqref{assu} and let $N$ be the number of negative eigenvalues of the operator $H$ with boundary condition \eqref{bc2}. Then, the following formulas hold.\\For $w(0) \neq 0$,
\begin{equation}\label{ungleich0}
\eta(\infty) - \eta(0) = \begin{cases} \pi N \ \  &\mbox{if}\  \gamma > 0, \\ \pi (N-\frac 12) \ \quad  &\mbox{if}\  \gamma = 0, \\ \pi (N-1) \ \quad  &\mbox{if}\  \gamma <  0 . \end{cases}
\end{equation}
For $w(0)=0$,
\begin{equation}\label{gleich0}
\eta(\infty)- \eta(0) =  \begin{cases} \pi (N + \frac 12)  \ \  &\mbox{if}\  \gamma > 0,  \\ \pi N \ \quad  &\mbox{if}\  \gamma =  0 ,  \\  \pi (N - \frac{1}{2}) \ \quad  &\mbox{if}\  \gamma <  0. \end{cases}
\end{equation}
\end{theorem}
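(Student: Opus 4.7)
The plan is to apply the argument principle to the meromorphic function $f(\zeta) := w(\zeta)/(\gamma - i\zeta)$ (which on the real axis equals $a(k)e^{i\eta(k)}$) on the contour $\Gamma_{R,\varepsilon}$ used in Section~5, and then let $R \to \infty$ and $\varepsilon \to 0$. By Lemma~\ref{zero}, the zeros of $w$ in the open upper half-plane are simple and are exactly the points $i\kappa_j$, giving $N$ simple zeros of $f$ inside $\Gamma_{R,\varepsilon}$. The factor $(\gamma - i\zeta)^{-1}$ contributes a simple pole at $\zeta = -i\gamma$ precisely when $\gamma < 0$ (otherwise the pole lies in the closed lower half-plane and is irrelevant). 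The argument principle therefore gives
\begin{equation*}
\text{total change of } \arg f \text{ along } \Gamma_{R,\varepsilon} \;=\; 2\pi N - 2\pi\,\mathbbm{1}\{\gamma < 0\}.
\end{equation*}

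Next I would evaluate the change of argument of $f$ along each piece of the contour. On $(\varepsilon, R)$, definition \eqref{nurw} gives $\arg f(k) = \eta(k)$, so the change is $\eta(R) - \eta(\varepsilon)$. On $(-R,-\varepsilon)$, the symmetry $w(-k) = \overline{w(k)}$ from \eqref{realw} yields $f(-k) = \overline{f(k)}$ and hence $\arg f(-k) = -\eta(k)$, producing the same change $\eta(R) - \eta(\varepsilon)$. Letting $R \to \infty$ and $\varepsilon \to 0$ contributes $2(\eta(\infty) - \eta(0))$ in total. On the large semicircle $C_R^+$, the asymptotics \eqref{1} show that $f(\zeta) \to 1$ uniformly, so the change of argument along $C_R^+$ tends to $0$ as $R \to \infty$.

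The remaining, and most delicate, piece is the small semicircle $C_\varepsilon^+$, traversed from $-\varepsilon$ to $\varepsilon$ through the upper half-plane (decreasing argument, i.e.\ clockwise). Here the local behavior of $f$ at $\zeta = 0$ splits into four cases. If $w(0) \ne 0$ and $\gamma \ne 0$, then $f$ extends continuously and nonvanishingly to $0$, so the change of argument tends to $0$. If $w(0) \ne 0$ and $\gamma = 0$, then $f(\zeta) \sim w(0)/(-i\zeta)$ has a simple pole at the origin, and a direct parametrization $\zeta = \varepsilon e^{i\phi}$, $\phi: \pi \to 0$, yields a change of $+\pi$. If $w(0) = 0$, then by the proposition preceding Theorem~\ref{levinson}, $w(\zeta) = -iw_0\zeta + o(\zeta)$ with $w_0 \ne 0$: for $\gamma \ne 0$ this makes $f(\zeta) \sim (-iw_0/\gamma)\zeta$ a simple zero, contributing a change of $-\pi$; for $\gamma = 0$, the ratio becomes $f(\zeta) \sim w_0$, again contributing $0$.

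Assembling these contributions, each of the six cases in \eqref{ungleich0}--\eqref{gleich0} reduces to a one-line solve for $\eta(\infty) - \eta(0)$. For example, in the case $w(0)\ne 0$, $\gamma = 0$ the equation $2(\eta(\infty) - \eta(0)) + \pi = 2\pi N$ gives $\pi(N - \tfrac12)$, and the remaining five cases follow the same pattern. The main technical obstacle is the case analysis on $C_\varepsilon^+$: one must simultaneously track whether $w$ vanishes at the origin (invoking the asymptotics \eqref{asymptojostfct}) and whether $\gamma - i\zeta$ does, and then compute the signed change of argument on a clockwise semicircle — it is precisely these $\pm\pi/2$ contributions that produce the half-integer shifts distinguishing the cases $\gamma > 0$, $\gamma = 0$, $\gamma < 0$ in \eqref{ungleich0} and \eqref{gleich0}.
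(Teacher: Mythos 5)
Your proposal is correct and follows essentially the same route as the paper: the argument principle applied to $D(\zeta)=w(\zeta)/(\gamma-i\zeta)$ on $\Gamma_{R,\varepsilon}$, with the pole at $-i\gamma$ for $\gamma<0$, the real-axis pieces giving $2(\eta(R)-\eta(\varepsilon))$, the large semicircle vanishing by \eqref{1}, and the small-semicircle case analysis via \eqref{asymptojostfct}. The only difference is cosmetic — you analyze the local behavior of the quotient at $\zeta=0$ directly, while the paper splits $\mathrm{var}\,\arg D$ into $\mathrm{var}\,\arg w-\mathrm{var}\,\arg(\gamma-i\zeta)$ — and all six resulting cases match \eqref{ungleich0} and \eqref{gleich0}.
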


\begin{proof}
We apply the argument principle to the function $D(\zeta)$ and the contour $\Gamma_{R,\varepsilon}$ given in Figure $1$. We choose $R$ and $\varepsilon$ such that all of the $N$ negative eigenvalues of $H$ lie inside the contour $\Gamma_{R,\varepsilon}$. Remember that if $\gamma <0$, then $H_0$ has a simple negative eigenvalue $-\gamma^2$. Thus, it follows with Lemma \ref{zero} that
\begin{equation}\label{argumentprinciple}
\int_{\Gamma_{R,\varepsilon}} \frac{\frac{d}{d\zeta} \left(\frac{w(\zeta)}{\gamma-i\zeta} \right)}{ \left(\frac{w(\zeta)}{\gamma-i\zeta} \right)} \,d\zeta =  \begin{cases} 2 \pi i N  \ \  &\mbox{if}\  \gamma \geq 0,  \\ 2 \pi i (N-1) \ \quad  &\mbox{if}\  \gamma <  0. \end{cases}
\end{equation}
Note, that the branch of the function $\ln D(\zeta)$ was fixed by the condition $\ln D(\zeta) \to 0$ as $|\zeta| \to \infty$. Thus, we have $ \ln D(\zeta) = \ln |D(\zeta)| +i \mbox{arg}\ D(\zeta)$. As for $k \in \R$, $\mbox{arg} \ D(k) = \eta(k)$ and $ \eta(-k) = -\eta(k)$, it follows from equation \eqref{argumentprinciple} that
\begin{equation}\label{combine}
2 (\eta(R) - \eta(\varepsilon)) + \mbox{var}_{C^+_R} \mbox{arg}\ D(\zeta)+  \mbox{var}_{C^+_{\varepsilon}} \mbox{arg}\ D(\zeta) = \begin{cases} 2 \pi N \ \  &\mbox{if}\  \gamma \geq 0,  \\ 2 \pi (N-1) \ \quad  &\mbox{if}\  \gamma <  0 . \end{cases}
\end{equation}
Note, that $\lim_{R \to \infty} \mbox{var}_{C^+_R} \mbox{arg}\ D(\zeta) = 0$ because of \eqref{1}. To compute $ \lim_{\varepsilon \to 0} \mbox{var}_{C^+_{\varepsilon}} \mbox{arg} \ D(\zeta)$, we rewrite
\begin{equation}\label{differenceende}
 \mbox{var}_{C^+_{\varepsilon}} \mbox{arg} \ D(\zeta) =  \mbox{var}_{C^+_{\varepsilon}} \mbox{arg} \ w(\zeta) -  \mbox{var}_{C^+_{\varepsilon}} \mbox{arg}(\gamma -i \zeta).
 \end{equation}
Considering \eqref{asymptojostfct}, we get
\begin{equation}\label{levi1}
\lim_{\varepsilon \to 0} \mbox{var}_{C^+_{\varepsilon}} \mbox{arg}\ w(\zeta )= \begin{cases} 0 \ \  &\mbox{if}\  w(0) \neq 0,  \\ -\pi \ \quad  &\mbox{if}\  w(0) =  0. \end{cases} 
\end{equation}
The second term in the right-hand side of \eqref{differenceende} depends on the sign of $\gamma$ and turns out to be in the limit,
\begin{equation}\label{levi2}
\lim_{\varepsilon \to 0} \mbox{var}_{C^+_{\varepsilon}} \mbox{arg} (\gamma -i \zeta )= \begin{cases} 0 \ \  &\mbox{if}\  \gamma > 0,  \\ -\pi \ \quad  &\mbox{if}\  \gamma =  0 ,  \\  0 \ \quad  &\mbox{if}\  \gamma <  0. \end{cases} 
\end{equation}
Combining \eqref{levi1} and \eqref{levi2} with equation \eqref{combine}, formulas \eqref{ungleich0} and \eqref{gleich0} follow immediately.
\end{proof}


\appendix
\section{Proof of Lemma \ref{appendix} }
\noindent For the sake of completeness, we prove here Lemma \ref{appendix}. In order to do so, we have to recall the construction of the Jost solution from \cite[Lemma 4.1.4]{Y} .

Instead of constructing $\tet$, it is more convenient to construct the function $b(x,\zeta)$, defined in \eqref{b} with asymptotics \eqref{asymptob}.
Then differential equation \eqref{eqb} is equivalent to the integral equation
\begin{equation}\label{inteq}
b(x,\zeta) = 1+ (2i\zeta)^{-1} \int_x^{\infty} ( e^{2i\zeta (y-x)} - 1)V(y) b(y,\zeta) \ dy
\end{equation}
considered on the class of bounded functions $b(x,\zeta)$. Its solution $b(x,\zeta)$ can be constructed by the method of successive approximations. Set $b_0(x,\zeta) = 1$,
\begin{equation}\label{bn+1}
b_{n+1}(x,\zeta)= (2i\zeta)^{-1} \int_x^{\infty} ( e^{2i\zeta (y-x)} - 1)V(y) b_n(y,\zeta) \ dy
\end{equation}
and use an obvious estimate
\begin{equation}\nonumber 
| e^{2i\zeta(y-x)} - 1| \leq 2, \quad x \leq y,\  \im \zeta \geq 0.
\end{equation}
Under assumption $\eqref{cond}$, it follows successively that 
\begin{equation}\label{bn}
| b_n(x,\zeta) | \leq  | \zeta |^{-n} (n!)^{-1} \left( \int_x^{\infty} |V(y)|  \ dy \right)^n.
\end{equation} 
For any fixed $x \geq 0$, every function $b_n(x,\zeta)$ is analytic in $\zeta$ in the upper half-plane $\im \zeta > 0$ and is continuous in $\zeta$ up to the real axis, with exception of the point $\zeta = 0$. It follows from \eqref{bn} that the limit 
\begin{equation}\label{limit}
b(x,\zeta) :=  \lim_{N \to \infty} b^{(N)} (x,\zeta), \quad \mbox{where} \ \ b^{(N)} (x,\zeta) = \sum_{n=0}^N b_n(x,\zeta),
\end{equation}
exists for all $x \geq 0$ uniformly with respect to $\zeta$ for $| \zeta | \geq c > 0$. Therefore the function $b(x,\zeta)$ has the same analytic properties in the variable $\zeta$ as the functions $b_n(x, \zeta)$. Moreover, estimates \eqref{bn} show that $b(x,\zeta)$ is a bounded function of $x$. Putting together definitions  \eqref{bn+1} and \eqref{limit}, we see that
$$
b^{(N)} (x,\zeta) = 1-b_{N+1}(x,\zeta) + (2i\zeta)^{-1} \int_x^{\infty} ( e^{2i\zeta (y-x)} - 1)V(y) b^{(N)}(y,\zeta) \ dy.
$$
Passing here to the limit $N \to \infty$, we arrive at equation \eqref{inteq}. The uniqueness of a bounded solution $b(x,\zeta)$ of equation \eqref{inteq} follows from estimate \eqref{bn} for a solution of the corresponding homogeneous equation. This estimate implies that $b(x,\zeta)=0$.

\begin{proof}[Proof of Lemma \ref{appendix}]
By \eqref{limit}, we have
\begin{equation}\label{darstellungtet}
\tet = e^{i\zeta x} \lim_{N\to \infty} \sum_{n=0}^N b_n(x,\zeta),
\end{equation}
where $b_n(x,\zeta)$ is given as in \eqref{bn+1}. As the limit in \eqref{darstellungtet} exists for all $x\geq 0$ uniformly with respect to $\zeta$ for $|\zeta| \geq c >0$, it follows that
\begin{equation}\nonumber
\theta'(x,\zeta) = i\zeta e^{i\zeta x} \lim_{N\to \infty} \sum_{n=0}^N b_n(x,\zeta) + e^{i\zeta x} \lim_{N\to \infty} \sum_{n=0}^N b'_n(x,\zeta).
\end{equation}
Obviously, we have $b'_0(x,\zeta) = 0$ and from \eqref{bn+1}, we derive 
$$
b'_{n}(x,\zeta)= - \int_x^{\infty} e^{2i\zeta (y-x)} V(y) b_{n-1}(y,\zeta) \ dy.
$$
Thus,
\begin{eqnarray}\label{semra}\nonumber
\left| \theta'(x,\zeta) -i \zeta e^{i\zeta x} \right| &=& e^{-\im \zeta x} |\zeta| \left| \lim_{N\to \infty} \sum_{n=1}^N b_n(x,\zeta) + \frac{1}{i\zeta} b'_n(x,\zeta) \right| \\
&=& \frac{e^{-\im \zeta x}}{2} \left| \lim_{N\to \infty} \sum_{n=1}^N \int_x^{\infty} \left(e^{2i\zeta(y-x)} +1 \right) V(y) b_{n-1}(y,\zeta) \,dy \right| .
\end{eqnarray}
Using the estimate
$$
|e^{2i\zeta(y-x)} +1| \leq 2, \quad x\leq y,\quad \im \zeta \geq 0,
$$
we see that the term in \eqref{semra} does not exceed
\begin{equation}\label{semra1}
e^{-\im \zeta x}  \lim_{N\to \infty} \sum_{n=0}^N \int_x^{\infty} \left|V(y)\right| |b_{n}(y,\zeta)| \,dy.
\end{equation}
Because of \eqref{bn}, expression \eqref{semra1} is less or equal
$$
e^{-\im \zeta x}  \lim_{N\to \infty} \sum_{n=0}^N |\zeta|^{-n} (n!)^{-1} \int_x^{\infty} \left|V(y)\right| \left(\int_y^{\infty} |V(t)| \,dt  \right)^n \,dy,
$$
which is equivalent to 
$$
e^{-\im \zeta x} |\zeta| \left( \exp\left( |\zeta|^{-1} \int_x^{\infty} |V(y)| \,dy \right) - 1 \right).
$$
This proves \eqref{eigeneabschatzung}. If $|\zeta| \geq k>0$ and condition \eqref{cond} is satisfied, then estimate \eqref{schluss} follows from \eqref{eigeneabschatzung}.
\end{proof}

\bibliographystyle{plain}
\bibliography{dh.bib}

\def\cprime{$'$} \def\cprime{$'$}
\begin{thebibliography}{10}

\bibitem{A}
Tuncay Aktosun.
\newblock Inverse scattering on the line with incomplete scattering data.
\newblock In {\em Partial differential equations and inverse problems}, volume
  362 of {\em Contemp. Math.}, pages 1--11. Amer. Math. Soc., Providence, RI,
  2004.

\bibitem{BR}
S.~M. Belov and A.~V. Rybkin.
\newblock Higher order trace formulas of the {B}uslaev-{F}addeev-type for the
  half-line {S}chr\"odinger operator with long-range potentials.
\newblock {\em J. Math. Phys.}, 44(7):2748--2761, 2003.

\bibitem{BY}
M.~S. Birman and D.~R. Yafaev.
\newblock The spectral shift function: The papers of {M}. {G}. {K}rein and
  their further developments.
\newblock {\em St. Petersburg Math. J.}

\bibitem{B}
D.~Boll\'e.
\newblock Sum rules in scattering theory and applications to statistical
  mechanics.
\newblock In {\em Lectures on recent results}, volume~2 of {\em
  Mathematics+Physics}, pages 84--153. World Scientific, Singapore, 1986.

\bibitem{BT}
A.~Boumenir and V.~K. Tuan.
\newblock A trace formula and {S}chmincke inequality on the half-line.
\newblock {\em Proc. Amer. Math. Soc.}, 137(3):1039--1049, 2009.

\bibitem{BF}
V.~S. Buslaev and L.~D. Faddeev.
\newblock Formulas for traces for a singular {S}turm-{L}iouville differential
  operator.
\newblock {\em Soviet Math. Dokl.}, 1:451--454, 1960.

\bibitem{DT}
P.~Deift and E.~Trubowitz.
\newblock Inverse scattering on the line.
\newblock {\em Comm. Pure Appl. Math.}, 32(2):121--251, 1979.

\bibitem{D}
L.~A. Diki{\u\i}.
\newblock Trace formulas for {S}turm-{L}iouville differential operators.
\newblock {\em Uspehi Mat. Nauk (N.S.)}, 13(3(81)):111--143, 1958.

\bibitem{GL}
I.~M. Gel{\cprime}fand and B.~M. Levitan.
\newblock On a simple identity for the characteristic values of a differential
  operator of the second order.
\newblock {\em Doklady Akad. Nauk SSSR (N.S.)}, 88:593--596, 1953.

\bibitem{GH}
F.~Gesztesy and H.~Holden.
\newblock On trace formulas for {S}chr\"odinger-type operators.
\newblock In {\em Multiparticle quantum scattering with applications to
  nuclear, atomic and molecular physics ({M}inneapolis, {MN}, 1995)}, volume~89
  of {\em IMA Vol. Math. Appl.}, pages 121--145. Springer, New York, 1997.

\bibitem{GHSZ}
F.~Gesztesy, H.~Holden, B.~Simon, and Z.~Zhao.
\newblock A trace formula for multidimensional {S}chr\"odinger operators.
\newblock {\em J. Funct. Anal.}, 141(2):449--465, 1996.

\bibitem{K}
Rowan Killip.
\newblock Spectral theory via sum rules.
\newblock In {\em Spectral theory and mathematical physics: a Festschrift in
  honor of Barry Simon's 60th birthday}, volume~76 of {\em Proc. Sympos. Pure
  Math.}, pages 907--930.

\bibitem{LW}
Ari Laptev and Timo Weidl.
\newblock Sharp {L}ieb-{T}hirring inequalities in high dimensions.
\newblock {\em Acta Math.}, 184(1):87--111, 2000.

\bibitem{LT}
Elliott~H. Lieb and Walter Thirring.
\newblock Inequalities for the moments of the eigenvalues of the
  {S}chr\"odinger {H}amiltonian and their relation to {S}obolev inequalities.
\newblock {\em Studies in Mathematical Physics, Princeton University Press,
  Princeton, NJ}, pages 269--303, 1976.

\bibitem{R}
Alexei Rybkin.
\newblock On a trace formula of the {B}uslaev-{F}addeev type for a long-range
  potential.
\newblock {\em J. Math. Phys.}, 40(3):1334--1343, 1999.

\bibitem{Ya}
D.~R. Yafaev.
\newblock Trace formulas for charged particles in nonrelativistic quantum
  mechanics.
\newblock {\em Teoret. Mat. Fiz.}, 11(1):78--92, 1972.

\bibitem{Y}
D.~R. Yafaev.
\newblock {\em Mathematical Scattering Theory, Analytic Theory}, volume 158 of
  {\em Mathematical Surveys and Monographs}.
\newblock American Mathematical Society, Providence, RI, 2010.

\bibitem{FZ}
V.~E. Zaharov and L.~D. Faddeev.
\newblock The {K}orteweg-de {V}ries equation is a fully integrable
  {H}amiltonian system.
\newblock {\em Funkcional. Anal. i Prilo\v zen.}, 5(4):18--27, 1971.

\end{thebibliography}
\end{document}